\newtheorem{theorem}{Theorem}[section]
\newtheorem{corollary}[theorem]{Corollary}
\newtheorem{lemma}[theorem]{Lemma}
\newtheorem{proposition}[theorem]{Proposition}
\theoremstyle{definition}
\newtheorem{definition}[theorem]{Definition}
\newtheorem{example}[theorem]{Example}
\newtheorem{remark}[theorem]{Remark}
\def \B{{\mathcal B}}
\def \N{{\mathbb N}}
\def \R{{\mathbb R}}
\def \1{{\mathbb 1}}
\numberwithin{equation}{section}
\begin{document}
\date{\today}
\title[A strong Bishop-Phelps property]{A strong Bishop-Phelps property and a new class of Banach spaces with the property $(A)$ of Lindenstrauss.}
\author{Mohammed Bachir}

\address{Laboratoire SAMM 4543, Université Paris 1 Panthéon-Sorbonne, France.}
\email{Mohammed.Bachir@univ-paris1.fr}

\maketitle

\begin{center} This work is dedicated to Gilles Godefroy.
\end{center}

\begin{abstract} We give a class of bounded closed sets $C$ in a Banach space  satisfying a generalized and stronger form of the Bishop-Phelps property studied by Bourgain in \cite{Bj} for dentable sets. A version of the {\it ``Bishop-Phelps-Bollobás''} theorem will be also given. The density and the residuality of bounded linear operators attaining their maximum on $C$ (known in the literature) will be replaced, for this class of sets, by being the complement of a $\sigma$-porous set. The result of the paper is applicable for both linear operators and non-linear mappings. When we apply our result to subsets (from this class) whose closed convex hull is the closed unit ball, we obtain a new class of Banach spaces involving property $(A)$ introduced by Lindenstrauss.  We also establish that this class of Banach spaces is stable under $\ell_1$-sum when the spaces have a same “modulus''. Applications to norm attaining bounded multilinear mappings and Lipschitz mappings will also be given.
\end{abstract}


\newcommand\sfrac[2]{{#1/#2}}

\newcommand\cont{\operatorname{cont}}
\newcommand\diff{\operatorname{diff}}

{\bf Keywords and phrases:} Norm attaining operators, Bishop-Phelps property, Uniform separation property, Porosity,  Lipschitz-free space.

{\bf 2020 Mathematics Subject Classiﬁcation:} Primary 46B20, 47L05, 28A05; Secondary 47B48, 26A21.

\section{\bf Introduction}
In this paper we will restrict ourselves to the case of real Banach spaces without treating the complex case in order to avoid technical diﬃculties. However, to the best of our knowledge, there should not be a heavy obstacle to extend the results to the complex case. Let $X$ be a real Banach space,  by $B_X$ and $S_X$ we denote the closed unit ball and the unit sphere of $X$ respectively. By $X^*$ we denote the topological dual of $X$.  If $C\subset X$ is a nonempty set, $\overline{\textnormal{co}}(C)$ denotes the closed convex hull of $C$.  By $\mathcal{L}(X,Y)$, we denote the space of all bounded linear operators from $X$ into the Banach space $Y$ and $\mathcal{F}(X,Y)$ denotes the subspace of $\mathcal{L}(X,Y)$ of all finite-rank operators. We say that an operator $T\in \mathcal{L}(X,Y)$ attains its norm if there exists $x_0\in S_X$ such that $\|T\|=\|T(x_0)\|$. A Banach space $X$ is said to have  the property $(A)$ (introduced by Lindenstrauss), if for every Banach space $Y$ the subset $NA(X,Y)$ of norm attaining bounded linear operators is dense in  $\mathcal{L}(X,Y)$ (see \cite{Aco0} and references  therein for background on this subject, see also \cite{ACKLM}, \cite{Ba}, \cite{CCGMR}, \cite{CGMR}, \cite {CS}, \cite{JMZ}). A bounded subset $C$ of a Banach space $X$ has the Bishop-Phelps property (see Bourgain's paper \cite{Bj} and the recent work in \cite{JMZ}) if, for every Banach space $Y$, the set of those operators in $\mathcal{L}(X, Y )$ for which $\sup \lbrace \|T(x)\| : x \in C\rbrace$ is a maximum, is dense in $\mathcal{L}(X, Y )$. Clearly, a Banach space $X$ has property $(A)$ if and only if $B_X$ has the Bishop-Phelps property. For the study of the {\it Bishop-Phelps-Bollob\'as property} we refer to the work of M. D. Acosta, R. M. Aron, D. Garc\'ia and M. Maestre  in \cite{AAGM} (see also \cite{Aco}) S. Dantas, D. Garc\'ia, M. Maestre and \'O. Rold\'an in \cite{DGMR} and the work of S. K Kim and  H.J Lee in \cite{KL} (see also \cite{DGKLM}).

 In \cite{Lj}, Lindenstrauss introduced the following property: A subset $S \subset S_X$ is said to be a set of uniformly strongly exposed points if there is a family of functionals $\lbrace f_x \rbrace _{x\in S}$ with $\|f_x\| = f_x(x) = 1$ for every $x \in S$ such that, given $\varepsilon  > 0$ there is a modulus $\omega(\varepsilon) > 0$ satisfying 
\[ \forall z\in B_X, \forall x\in S: \|x-z\|\geq \varepsilon \Longrightarrow 1-\omega(\varepsilon) \geq f_x(z),\]
that is, all elements of $S$ are strongly exposed points with a same  modulus $\omega(\varepsilon)$.  By setting $f_{-x}:=-f_x$ for all $x\in S$, we see that  $S\cup -S$ is a set of uniformly strongly exposed points (by the family $\lbrace f_x \rbrace _{x\in S\cup -S}$) whenever $S\subset S_X$ is a set of uniformly strongly exposed points (by the family $\lbrace f_x \rbrace _{x\in S}$). Thus, we can always assume that a set of uniformly strongly exposed points is symmetric.

Let $C$ be a nonempty bounded subset of $X$. We say that $(C,\|\cdot\|)$ has the uniform separation property (in short, has the $USP$) if for every $\varepsilon>0$ small enough there exists a modulus $\omega(\varepsilon)>0$ and a family $\lbrace f_{x,\varepsilon}\in B_{X^*}: x\in C, \varepsilon>0\rbrace$  such that,
\[ \forall x, y \in C: \|x-y\| \geq \varepsilon \Longrightarrow \langle f_{x,\varepsilon}, x \rangle -  \omega(\varepsilon) \geq  \langle f_{x,\varepsilon}, y \rangle.\]
It is easy to see that a set $(C,\|\cdot\|)$ has the $USP$ if and only if $(\overline{C},\|\cdot\|)$ has the $USP$. The notion of $``USP"$ was introduced  by the author in \cite{Ba}. It has been used recently in \cite{Ba0} in the context of operators attaining their numerical radius. We see  that if $(C,\|\cdot\|)$ is a bounded closed subset with the $USP$, then the subset $\textnormal{se}(C)$ of all strongly exposed points of $C$ is dense in $C$ (see Remark \ref{pro01}). It turns out that this class of sets satisfies a generalized and strong form of Bishop-Phelps property. It also allows to exhibit a new class of Banach spaces satisfying property $(A)$ of Lindenstrauss.  

\begin{definition} Let $X$ be a Banach space.

$(i)$ We say that $X$ has the property $n.u.s.e.$ (norming uniformly strongly exposed points) with a modulus $\omega(\cdot)$, if  $B_X=\overline{\textnormal{co}}(S)$ for some symmetric set $S\subset S_X$ of uniformly strongly exposed points  with the modulus $\omega(\cdot)$. 

$(ii)$ We say that $X$ has the property $n.u.s.$ (norming uniform separation) with a modulus $\omega(\cdot)$, if  $B_X=\overline{\textnormal{co}}(C)$ for some symmetric set $C\subset S_X$ such that $(C,\|\cdot\|)$ has the $USP$ with the modulus $\omega(\cdot)$. 
\end{definition}
For the notion of $\sigma$-porosity, we refer to Definition  ~\ref{prous} in Section ~\ref{S1} and the reference \cite{Za}.
\begin{definition}
We say that a Banach space $X$ has property $(\sigma A)$ if for every Banach space $Y$ and every closed subspace $\mathcal{Z}$ of $\mathcal{L}(X,Y)$ containing the rank-one operators, the subset $NA(X,Y)\cap \mathcal{Z}$ is the complement of a $\sigma$-porous subset of $\mathcal{Z}$. 
\end{definition}
Our main result is  given in Theorem ~\ref{Gprince1} (and its direct consequence Proposition ~\ref{Gprince}. Some techniques used in this article are  inspired by those developed by the author in \cite{Ba0}, but the results are different and new). This result says that if $C\subset X$ has the $USP$ then it satisfies a generalized and a strong form of the Bishop-Phelps property established by Bourgain in \cite{Bj} for sets that are dentable (see also the recent work in \cite{JMZ}). In addition, our result gives a version of the {\it “Bishop-Phelps-Bollobás"} theorem for linear operators and non-linear mappings.  Trivially, the property $(\sigma A)$ implies the property $(A)$ and the property $n.u.s.e.$ implies the property $n.u.s.$ Lindenstrauss proved (see \cite{Lj}) that  the property $n.u.s.e.$ implies property $(A)$.  When we apply our results to a subset $(C,\|\cdot\|)$ having the $USP$ and satisfying $B_X=\overline{\textnormal{co}}(C)$, then we obtain an extension of the Lindenstrauss result, that is: $ \textnormal{ Property } n.u.s. \Longrightarrow \textnormal{ Property } (\sigma A) \hspace{1mm} \textnormal{(see Theorem \ref{cor1})}.$

A space with property $\alpha$ introduced by W. Schachermayer in \cite{Sch} or a uniformly convex space have the property $n.u.s.e$ (in uniformly convex spaces, the unit sphere is a set of uniformly exposed points, see \cite[Lemma 1]{Pbj}). As mentioned above,  the property $n.u.s.e.$  implies in turn the property $n.u.s.$ (introduced in this paper), but the converse is not true in general even in two-dimensional normed spaces (see Remark \ref{recap}). We also prove  in Proposition ~\ref{Gprince001} that the class of Banach spaces with the property $n.u.s.$ is stable under $\ell_1$-sum when the spaces have a “same modulus".  By applying our results to the Lipschitz-free space  (see \cite{GK}, \cite{W}, \cite{AE}) we also give an extension of some results recently obtained  in \cite[Proposition 3.3 \& Corollary 3.8]{CCGMR} (see Section \ref{FS}, Corollary \ref{cor2}). Finally, under the condition that Banach spaces $E_i$ have property $n.u.s.e.$ for $i\in \lbrace 1,...,n\rbrace$, we prove in Corollary \ref{cor03} that the set of bounded multilinear mappings from $\prod_{i=1}^n E_i$ into $W$, {\it attaining their norm} is also the complement of a $\sigma$-porous set.

The paper is organized as follows. In Section \ref{S1}, we will give  the definitions and notions that we will use in the other sections. In Section \ref{MR} we prove our main result Theorem \ref{Gprince1} after proving some lemmas and we will give as a consequence Proposition ~\ref{Gprince}. In Section \ref{U}, we will deal with  property $n.u.s.$ We show that the property $n.u.s.$  is stable under $\ell_1$-sum (Proposition \ref{Gprince001}). It will then be shown that a space with  property $n.u.s.$ has property $(\sigma A)$ (Theorem \ref{cor1}). A result of $\sigma$-porosity of norm non-attaining multilinear mappings will be given in Corollary \ref{cor03} (Section \ref{F4S}). In Section \ref{FS}, we will apply our results to Lipschitz-free spaces  in Corollary \ref{cor2} to obtain some extensions of the recent results obtained in \cite[Proposition 3.3 \& Corollary 3.8]{CCGMR}.

\section{Notions and Definitions} \label{S1}

We first recall some notions and definitions that we are going to use in this paper.
\begin{definition} Let $C$ be a nonempty set and $\gamma : C\times C\to \R^+$. We say that $\gamma$ is a pseudometric if 
	
	$(1)$ $\gamma(x,x)= 0$, for all $x \in C$.
	
	$(2)$ $\gamma(x,y)=\gamma(y,x)$, for all $x \in C$.
	
	$(3)$ $\gamma(x,y)\leq \gamma(x,z)+ \gamma(z,y)$, for all $x,y,z \in C$.
\end{definition}
Unlike a metric space,  one may have $\gamma ( x , y ) = 0$  for distinct values $ x \neq y $. A pseudometric induces an equivalence relation, that converts the pseudometric space into a metric space. This is done by defining $x\sim y$ if and only if $\gamma ( x , y ) = 0$.
Let $\Gamma_\gamma:  C\to  C/\sim~ $ be the canonical surjection mapping and let 
$$d_\gamma(\Gamma_\gamma(x),\Gamma_\gamma(y)):=\gamma(x,y).$$
We set $[C]:=C/\sim~ $. Then, $([C],d_\gamma)$ is a well defined metric space. We say that $(C,\gamma)$ is a complete pseudometric space, if $([C],d_\gamma)$ is a complete metric space. 

\vskip5mm
We  need the following definition of uniform separation property for pseudometrics introduced in \cite{Ba}.
\begin{definition} \label{USP}
	Let $X$ be a Banach space, $C$ be a subset of the dual $X^*$ and $(C,\gamma)$ be a pseudometric space.  We say that  $(C,\gamma)$ has the weak$^*$-uniform separation property in $X^*$, in short $w^*USP$ if  for every $\varepsilon>0$ small enough, there exists $\omega_C(\varepsilon)>0$ and a familly $\mathcal{F}_C:=\lbrace x_{p,\varepsilon} \in B_X: p\in C, \varepsilon >0 \rbrace$ satisfying: for every $p, q \in C$,
	\begin{eqnarray*}
		\gamma(p,q)\geq \varepsilon \Longrightarrow \langle p , x_{p,\varepsilon} \rangle -  \omega_C(\varepsilon) \geq  \langle q , x_{p,\varepsilon} \rangle.
	\end{eqnarray*} 
 If $C$ is a subset of a  Banach space $X$, we say that $(C,\gamma)$ has the $USP$ in $X$ if $(C,\gamma)$ has the $w^*USP$ in $X^{**}$, when $C$ is considered as a subset of the bidual $X^{**}$. \\
The function $\omega_C$ is called, a {\it modulus of uniform separation} of $(C,\gamma)$. \\
\end{definition}
Note that if $\omega_C$ is a modulus of uniform separation of $(C,\gamma)$ then every $0< \delta(\cdot) \leq \omega_C(\cdot)$ is still a modulus of uniform separation.
\begin{example} We reefer to \cite{Ba} for some examples of pseudometric spaces having the uniform separation property. Let $X$ be a uniformly convex Banach space  with a modulus of uniform convexity $\delta(\cdot)$. Then, $(S_{X},\|\cdot\|)$ has the $USP$ in $X$ with a modulus of uniform separation $\delta(\cdot)$ (see \cite[Proposition 2.5]{Ba}).
\end{example}

Recall that a function $f$ has a strong minimum on a metric space $(C,d)$ at some point $p\in C$, if $f$ attains its minimum at $p$ and for any sequence $(p_n)\subset C$ such that $f(p_n)\to f(p)=\inf_C f$, we have that $d(p_n,p)\to 0$. A function $f$ has a strong maximum if $-f$ has a strong minimum. 

To obtain our result in the more general case of pseudometric spaces (useful in Section \ref{MR}), we need to recall the following general definition from \cite{Ba}.
\begin{definition} \label{Gdir} Let $(C,\gamma)$  be a pseudometric space. Let $f: C \to \R \cup \lbrace +\infty \rbrace$ be a proper bounded from below function. We say that $f$ is $\gamma$-strongly minimized on $C$ at $u \in C$  if and only if for every sequence $(q_n)\subset C$ we have 
$$\lim_{n\rightarrow +\infty} f(q_n)=\inf_C f \Longrightarrow \lim_{n\rightarrow +\infty} \gamma(q_n,u)=0.$$
A function $f$ is $\gamma$-strongly maximized on $C$ at $u \in C$ if and only if $-f$ is $\gamma$-strongly minimized on $C$ at $u \in C$.
\end{definition}
In the general case, it may be that in the previous definition we have that $\inf_C f \neq f(u)$. However, the direction $u$ is necessarilly unique up to the relation $\sim$, that is, every other direction $v\in C$ satisfying the above property is such that $\gamma(v,u)=0$ and the converse is also true. Note that if, moreover,  we assume that$f$ is lower semicontinuous with respect to the pseudometric $\gamma$ (that is, for every sequence $(q_n)\subset C$, $\liminf_{n\rightarrow +\infty} f(q_n)\geq f(u)$, whenever $\lim_{n\rightarrow +\infty} \gamma(q_n,u)=0$), then the infimum of $f$ is attained at $u$. In the particular case where $\gamma$  is a metric and $f$ is lower semicontinuous for $\gamma$, the notion of  ``is $\gamma$-strongly minimized on $C$" coincides with the classical notion of {\it strong minimum} mentioned above.
\vskip5mm
Let $(C,\gamma)$ be a pseudometric subset of a Banach space $X$. Suppose that the identity map $i: (C,\gamma) \to (C,\textnormal{weak})$ is continuous (that is, the topology induced by $\gamma$ on $C$ is finer than the weak-topology, this implies in particular that every functional $f\in X^*$ is $\gamma$-continuous on $C$). We say that $x\in C$ is a $\gamma$-strongly exposed point of $C$ if there exists a functional $f\in X^*$ such that $f$ is $\gamma$-strongly maximized on $C$ at $x$. Equivalently, $f(x)=\sup_{y\in C} f(y)$ and for every sequence $(x_n)\subset C$, we have $\gamma(x_n,x)\to 0$ whenever $f(x_n)\to f(x)$. The set of all $\gamma$-strongly exposed points of $C$ will be denoted $\textnormal{se}_\gamma(C)$. When $\gamma$ is the metric given by the norm $\|\cdot\|$, we simply say {\it ``strongly exposed point of $C$"} instead of {\it ``$\gamma$-strongly exposed point of $C$"} and denotes $\textnormal{se}(C)$ instead of $\textnormal{se}_\gamma(C)$.
\vskip5mm

We recall the notion of $\sigma$-porosity. In the following definition,  $\mathring{B}_X(x ; r)$ stands for the open ball in $X$ centered at $x$ and with radius $r > 0$.
\begin{definition}\label{prous}  Let $(X ; d)$ be a metric space and $A$ be a subset of $X$. The set $A$ is
said to be porous in $X$ if there exist $\lambda_0 \in (0; 1]$ and $r_0 > 0$ such that for any $x \in X$
and $r \in (0; r_0]$ there exists $y \in X$ such that $\mathring{B}_X(y; \lambda_0r) \subseteq \mathring{B}_X(x; r) \cap (X \setminus A)$. The set
$A$ is called $\sigma$-porous in $X$ if it can be represented as a countable union of porous sets in $X$.
\end{definition}
 Every $\sigma$-porous set is of first Baire category. Moreover, in $\R^n$, every $\sigma$-porous set is of Lebesque measure zero. However,  there does exist a non-$\sigma$-porous subset of $\R^n$ which is of the first category and of Lebesgue measure zero. For more informations about $\sigma$-porosity, we refer to \cite{Za}. 
\section{Main result} \label{MR}

This section is devoted to the proof of our main result Theorem \ref{Gprince1} below. We start by observing that $\gamma$-strongly exposed points of a complete pseudometric space having the $USP$ are dense.
\begin{remark} \label{pro01} Let $X$ be a Banach space and $(C,\gamma)$ be a norm bounded complete pseudometric space having the $USP$ in $X$ and such that the identity map $i: (C,\gamma)\to (C, \textnormal{weak})$ is continuous. Then, $C=\overline{\textnormal{se}_\gamma(C)}^{\gamma}$ (the closure for the pseudometric $\gamma$). 
\end{remark}
\begin{proof} We apply \cite[Theorem 3.7]{Ba} with the null function in the bidual space $X^{**}$ and by considering $C$ as a subset of $X^{**}$. We get that for every $\varepsilon>0$ and every $x\in C$, there exists a functional $p\in X^*$ such that $p$ is $\gamma$-strongly maximized on $C$ at some point $\bar{x}\in C$ satisfying $\gamma(\bar{x}, x) < \varepsilon$. By the continuity of $i: (C,\gamma)\to (C, \textnormal{weak})$, it follows that $\bar{x}\in \textnormal{se}_\gamma(C)$ and $\gamma(\bar{x}, x) < \varepsilon$.
\end{proof}
In all what follows, we assume that $X,W$ are real Banach spaces and $C\subset X$ will be a symmetric set.  Let $\gamma$ be a pseudometric on $C$ satisfying $\gamma(x,y)=\gamma(-x,-y)$ for all $x,y\in C$. Define 
$$\gamma^s(x, y):=\min(\gamma(x,y), \gamma(x,-y)), \hspace{1mm} \forall x, y \in C.$$

We denote $(\mathcal{C}_b(C,W), \|\cdot\|_{\infty})$ the Banach space of all bounded continuous functions from $(C, \gamma)$ into $W$, equipped with the sup-norm. 
For each $x\in C$ and $w^*\in S_{W^*}$, we denote $w^* \circ \hat{x} \in (\mathcal{C}_b(C,W))^*$ the functional defined by $w^*\circ \hat{x}(T):=w^*(T(x))$ for all $T\in \mathcal{C}_b(C,W)$. We define 
$$\mathcal{K}_C:=\lbrace w^*\circ \hat{x}: x\in C; w^*\in S_{W^*}\rbrace\subset B_{(\mathcal{C}_b(C,W))^*}$$
and for all $w^*\circ \hat{x}, v^*\circ \hat{y}\in \mathcal{K}_C$:
$$\gamma_{\mathcal{P}}(w^*\circ \hat{x}, v^*\circ \hat{y}):=\gamma^s(x,y).$$
The map $\gamma_{\mathcal{P}}$ is generally well defined as we will see in Lemma \ref{ZUSP}. For each fixed $T\in \mathcal{C}_b(C,W)$, we define the map $\hat{T}$ from $\mathcal{K}_C$ into $\R$ by $$\hat{T}: w^*\circ \hat{x} \mapsto w^*(T(x)) \textnormal{  for all } w^*\circ \hat{x}\in \mathcal{K}_C.$$ 
We need the following definition.

\begin{definition} \label{Gdir1} Let $X,W$ be Banach spaces and $C\subset X$ be a symmetric set.  Let $\gamma$ be a pseudometric on $C$ satisfying $\gamma(x,y)=\gamma(-x,-y)$ for all $x,y\in C$. We say that a mapping $T\in \mathcal{C}_b(C,W)$ attains $\gamma^s$-strongly its sup-norm  if there exists $\bar{x} \in C$ such that:

$(a)$ $\|T\|_{\infty}:=\sup_{x\in C}\|T(x)\|=\|T(\bar{x})\|$ and 

$(b)$ for all sequence $(x_n)\subset C$, we have $$\|T(x_n)\| \to \sup_{x\in C}\|T(x)\|  \Longrightarrow \gamma^s(x_n,\bar{x})\to 0.$$
\end{definition}

We now give our main result. It is based on a variational principle given by the author in \cite{Ba}. The following result is complementary to \cite[Theorem ~4.6]{Ba}. The difference is that here we deal with the  more general class of pseudometric spaces having the $USP$  which allows  the space $\mathcal{Z}$ to be chosen as any closed subspace of  $\mathcal{L}(X,W)$ containing the space of rank-one operators. 

 The following result says that a pseudometric space $(C,\gamma)$ having the $USP$ satisfies a generalized and stronger form of the Bishop-Phelps property and also a “quantitative version” of the Bishop-Phelps-Bollobás theorem. On the one hand, the density is replaced by being the complement of a $\sigma$-porous set and on the other hand, the result is applicable for both linear  operator spaces and non-linear mapping spaces. A more classical situation for linear operator spaces when $C$ is equipped with the norm, is given in Proposition ~\ref{Gprince}. Applications to norm attaining bounded linear operators and multilinear mappings are given respectively  in Theorem ~\ref{cor1} and Corollary ~\ref{cor03}  and an application to Lipschitz-free spaces  is given in Corollary \ref{cor2}.

\begin{theorem} \label{Gprince1} Let $X, W$ be Banach spaces and $C\subset X$ be a norm bounded and symmetric set. Let $\gamma$ be a complete pseudometric on $C$ satisfying $\gamma(x,y)=\gamma(-x,-y)$ for all $x,y\in C$. Suppose that the identity mapping $i: (C,\gamma)\to (C,\textnormal{weak})$ is continuous and that $(C,\gamma)$ has the $USP$ in $X$ with a modulus of uniform separation $\omega_C(\cdot)$. Let $W$ be any Banach space and $(\mathcal{Z}, \|\cdot\|_\mathcal{Z})$ be a Banach space included in $\mathcal{C}_b(C,W)$ and satisfy the following properties:

$(i)$   $\|\cdot\|_\mathcal{Z}\geq \|\cdot\|_{\infty}$.

$(ii)$ $\mathcal{Z}$ contains  the set of rank-one operators and is such that $\|x^*(\cdot)e\|_\mathcal{Z}\leq \|x^*\|\|e\|$, for all $x^*\in X^*$ and all $e\in W$.

Then, the following assertions hold.

$(1)$ for every $S\in \mathcal{C}_b(C,W)$, the set 
\begin{eqnarray*}
\mathcal{G}(S):=\lbrace T\in \mathcal{Z}: S+T \textnormal{ attains } \gamma^s\textnormal{-strongly  its sup-norm on } C \rbrace,
\end{eqnarray*}
is the complement of a $\sigma$-porous set of $(\mathcal{Z}, \|\cdot\|_\mathcal{Z})$.  

$(2)$ the following version of the Bishop-Phelps-Bollobás theorem holds: for every $\varepsilon>0$, there exists $\lambda(\varepsilon):=\frac{\varepsilon}{8} \omega_C(\varepsilon/4) >0$ such that for every $T\in Z$, $\|T\|_{\infty}=1$ and every $x\in C$ satisfying $\|T(x)\|>1-\lambda(\varepsilon),$ there exists $S \in Z$, $\|S\|_{\infty}=1$ and $\overline{x}\in C$ such that 

$(a)$ the operator $S$  attains $\gamma^s$-strongly  its sup-norm  at  $\overline{x}$, 

$(b)$ $\gamma^s(\overline{x},x):=\min(\gamma(\bar{x},x), \gamma(\bar{x},-x))<\varepsilon$ and $\|T-S\|_{\infty}<\varepsilon.$
\end{theorem}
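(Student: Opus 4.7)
My plan is to linearize the problem by transferring everything to the dual side via $\mathcal{K}_C \subset B_{\mathcal{Z}^*}$. Since $\|T\|_{\infty} = \sup_{p\in\mathcal{K}_C}\widehat{T}(p)$ and, by Hahn-Banach, any $x\in C$ at which $\|T(x)\|$ is approached extends to some $p = w^* \circ \hat{x} \in \mathcal{K}_C$ at which $\widehat{T}(p)$ is approached, the condition ``$T$ attains $\gamma^H$-strongly its sup-norm at $\bar x$'' becomes equivalent to ``$\widehat T$ is $\gamma_{\mathcal P}$-strongly maximized on $\mathcal K_C$ at some $\bar p$ lying over $\bar x$''. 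I would then show that $(\mathcal{K}_C, \gamma_{\mathcal P})$ has the $w^*USP$ in $\mathcal{Z}^*$ with a modulus controlled by $\omega_C$, and invoke the variational principle from \cite{Ba} (the analogue of Theorem~4.6 therein, which allows $\mathcal Z$ to be any closed subspace of $\mathcal{L}(X,W)$ containing the rank-one operators).

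\textbf{The $w^*USP$ transfer and Part~(1).} Fix $p = w^*\circ \hat x$ and $q = v^*\circ \hat y$ in $\mathcal K_C$ with $\gamma_{\mathcal P}(p,q) = \gamma^H(x,y)\geq\varepsilon$. Then $\gamma(x,y)\geq\varepsilon$ and, by symmetry of $C$ together with $\gamma(-\cdot,-\cdot)=\gamma(\cdot,\cdot)$, also $\gamma(x,-y)\geq\varepsilon$. Two applications of the $USP$ of $(C,\gamma)$ yield a single functional $f:=f_{x,\varepsilon}\in B_{X^*}$ with $\langle f,x\rangle - \omega_C(\varepsilon) \geq |\langle f,y\rangle|$. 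Given a small $\delta>0$, pick $e\in S_W$ with $w^*(e)\geq 1-\delta$ and set $T_{p,\varepsilon}:=f(\cdot)e$. Hypothesis~(ii) gives $\|T_{p,\varepsilon}\|_{\mathcal Z}\leq 1$, and a direct computation produces
\[
\langle p,T_{p,\varepsilon}\rangle - \langle q, T_{p,\varepsilon}\rangle \;\geq\; \omega_C(\varepsilon) - \delta M, \qquad M:=\sup_{z\in C}\|z\|,
\]
so choosing $\delta$ small enough (depending only on $\varepsilon$ and $M$) yields the $w^*USP$ with modulus, say, $\omega_C(\varepsilon)/2$. Applying the variational principle of \cite{Ba} to $\widehat S$ on the complete pseudometric space $(\mathcal K_C,\gamma_{\mathcal P})$ inside $\mathcal Z$ then asserts that $\{T\in\mathcal Z : \widehat{S+T} \text{ is } \gamma_{\mathcal P}\text{-strongly maximized on } \mathcal K_C\}$ is the complement of a $\sigma$-porous set, and the Hahn-Banach correspondence above translates this back to Part~(1).

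\textbf{Quantitative BPB (Part~(2)).} Given $T\in\mathcal Z$ with $\|T\|_\infty=1$ and $x\in C$ with $\|T(x)\|>1-\lambda(\varepsilon)$, I would choose $w^*\in S_{W^*}$ norming $T(x)$, then $e\in S_W$ with $w^*(e)$ close to $1$, and take $f:=f_{x,\varepsilon/4}$ from the $USP$. Perturbing by $T_1 := T + \tfrac{\varepsilon}{4}f(\cdot)e$ costs at most $\varepsilon/4$ in $\|\cdot\|_{\mathcal Z}$, and for any $y\in C$ with $\gamma^H(y,x)\geq\varepsilon$ one estimates
\[
\|T_1(x)\| - \|T_1(y)\| \;\geq\; \tfrac{\varepsilon}{4}\omega_C(\varepsilon/4) - \lambda(\varepsilon) - \tfrac{\varepsilon}{4}\delta M,
\]
which, with $\lambda(\varepsilon) = \tfrac{\varepsilon}{8}\omega_C(\varepsilon/4)$ and $\delta$ small, is strictly positive. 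Applying Part~(1) to $T_1$ yields an arbitrarily small further perturbation $T_2\in\mathcal Z$ such that $S := T_1 + T_2$ attains $\gamma^H$-strongly its sup-norm at some $\bar x$; the gap above, stable under a sufficiently small $\|T_2\|_{\mathcal Z}$, forces $\gamma^H(\bar x,x)<\varepsilon$. A final renormalization recovers $\|S\|_\infty = 1$ with $\|T-S\|_\infty<\varepsilon$.

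\textbf{Main obstacle.} The principal difficulty is the uniform verification of the $w^*USP$ for $(\mathcal K_C,\gamma_{\mathcal P})$: the rank-one construction depends on the pair $(x,w^*)$, and one must ensure the resulting modulus depends only on $\varepsilon$ and $M$, not on $w^*\in S_{W^*}$. A secondary difficulty is calibrating constants in Part~(2) to match the prescribed $\lambda(\varepsilon)$ while handling the renormalization without spoiling the $\gamma^H$-proximity of $\bar x$ to $x$.
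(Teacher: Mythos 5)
Your proposal is correct and, for part (1), follows essentially the same route as the paper: Hahn--Banach linearization onto $\mathcal{K}_C\subset B_{\mathcal{Z}^*}$, transfer of the $USP$ to $(\mathcal{K}_C,\gamma_{\mathcal{P}})$ via the rank-one perturbations $f_{x,\varepsilon}(\cdot)e$ with $w^*(e)$ close to $1$ (yielding the same modulus $\omega_C(\cdot)/2$), and then the variational principle of \cite{Ba}. Two points of comparison. First, for part (2) the paper does not re-derive the quantitative statement: it invokes the Bishop--Phelps--Bollob\'as version of the variational principle (\cite[Theorem 3.7]{Ba}) applied to $\hat{T}$ on $(\mathcal{K}_C,\gamma_{\mathcal{P}})$, obtaining a perturbation $R$ with $\|R\|_{\infty}<\varepsilon/2$ and then renormalizing $S:=(T-R)/\|T-R\|_{\infty}$; your gap-and-perturb argument deduces the same conclusion from part (1) alone, with the same calibration $\lambda(\varepsilon)=\frac{\varepsilon}{8}\omega_C(\varepsilon/4)$, which is a legitimate and slightly more self-contained alternative. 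Second, you leave unverified two ingredients the paper treats as separate lemmas and which are genuinely needed: (a) that $\gamma_{\mathcal{P}}$ is \emph{well defined} on $\mathcal{K}_C$ (one must check that $w^*\circ\hat{x}=v^*\circ\hat{y}$ forces $y=\pm x$, which uses that $\mathcal{Z}$ contains the rank-one operators) and that $(\mathcal{K}_C,\gamma_{\mathcal{P}})$ is \emph{complete} --- completeness of $\gamma^H$ is not immediate from that of $\gamma$ and is proved in the paper (Proposition \ref{Gprop1}) by embedding $x\mapsto\lbrace\Gamma_\gamma(x),\Gamma_\gamma(-x)\rbrace$ isometrically into the Hausdorff-metric space of compacta of $[C]$; without completeness the variational principle does not apply; and (b) the asserted equivalence between ``$T$ attains $\gamma^H$-strongly its sup-norm at $\bar{x}$'' and ``$\hat{T}$ is $\gamma_{\mathcal{P}}$-strongly maximized on $\mathcal{K}_C$'' is the paper's Lemma \ref{GgammaH} and requires a short argument in both directions, in particular that the attaining point is recovered only up to sign ($\bar{x}=\pm x_0$). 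These are fillable omissions rather than flaws in the approach.
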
 
The proof of Theorem \ref{Gprince1} will be given after Proposition \ref{Gprop1},  Lemma ~\ref{ZUSP} and Lemma ~\ref{GgammaH}.
\vskip5mm
Recall that  the Hausdorff distance $d^H(A,B)$ between two nonempty  closed and bounded subsets $A, B$ of some metric space $(E,d)$ is defined as follows.
\begin{eqnarray*}
 d^H ( A , B ) &:=& \max (\sup_{ y\in B}  d(y , A ) , \sup_{x \in A} d ( x , B ) ) \\
&=& \max \lbrace \sup_{y \in B} \inf_{x \in A} d ( x , y ) , \sup_{x\in A} \inf_{y \in B} d ( x , y ) \rbrace.
\end{eqnarray*}
It is well-known that if $(E,d)$ is complete and if we denote $K(E)$ the set of all nonempty compact sets of $E$, then $(K(E), d^H)$ is a complete metric space (see \cite{CK} and the references therein).

\begin{proposition} \label{Gprop1} Let $X$ be a Banach space and $C$ be a nonempty symmetric subset of $X$. Let $\gamma$ be a pseudometric on $C$ satisfying $\gamma(x,y)=\gamma(-x,-y)$ for all $x,y\in C$. Then, the following assertions hold.

$(i)$ $\gamma^s(\pm x, \pm y)=\gamma^s(x, y):=\min(\gamma(x,y), \gamma(x,-y))$ for all $x, y \in C$.

$(ii)$ If $(C,\gamma)$ is a complete pseudometric space, then $(C,\gamma^s)$ is also a complete pseudometric space. 

\end{proposition}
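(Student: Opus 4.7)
The plan is to dispatch part $(i)$ by a direct symmetry computation and then to check in part $(ii)$ the three pseudometric axioms for $\gamma^H$, the only non-trivial one being the triangle inequality, and finally to prove completeness by a sign-choice subsequence argument.

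For $(i)$, the symmetries available are $\gamma(a,b)=\gamma(b,a)$ (pseudometric axiom) and $\gamma(a,b)=\gamma(-a,-b)$ (hypothesis). Combining the two I obtain the four useful identities $\gamma(-x,y)=\gamma(x,-y)$, $\gamma(-x,-y)=\gamma(x,y)$ (and their transposes). Applying these to the two terms in the $\min$ defining $\gamma^H(-x,y)$, $\gamma^H(x,-y)$, $\gamma^H(-x,-y)$ shows that in each case the pair $\{\gamma(x,y),\gamma(x,-y)\}$ is recovered, so their minimum is $\gamma^H(x,y)$.

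For $(ii)$, reflexivity $\gamma^H(x,x)=\min(0,\gamma(x,-x))=0$ is immediate, and symmetry of $\gamma^H$ follows from $(i)$ together with the symmetry of $\gamma$. For the triangle inequality $\gamma^H(x,y)\le \gamma^H(x,z)+\gamma^H(z,y)$, I will do a four-case analysis on which branch of the $\min$ realises each of $\gamma^H(x,z)$ and $\gamma^H(z,y)$. In each case the right-hand side bounds $\gamma(x,y)$ or $\gamma(x,-y)$ via the triangle inequality for $\gamma$, after using $\gamma(z,-y)=\gamma(-z,y)$ so the two summands chain through a common middle point ($z$ or $-z$); since $\gamma^H(x,y)$ is the minimum of these two quantities, the bound follows in all four cases.

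The main obstacle is completeness. Given a $\gamma^H$-Cauchy sequence $(x_n)\subset C$, extract a subsequence $(y_n)$ with $\gamma^H(y_{n+1},y_n)<2^{-n}$. I will build signs $\epsilon_n\in\{-1,+1\}$ inductively with $\epsilon_1=1$, setting $z_n:=\epsilon_n y_n$, so that at each step the branch of the $\min$ realising $\gamma^H(y_{n+1},y_n)=\gamma^H(y_{n+1},z_n)=\gamma^H(y_{n+1},-z_n)$ (using $(i)$) corresponds to $\gamma(\epsilon_{n+1}y_{n+1},z_n)$; explicitly, choose the sign $\epsilon_{n+1}$ which makes $\gamma(z_{n+1},z_n)=\gamma^H(y_{n+1},y_n)$, which is always possible by invoking $\gamma(-a,-b)=\gamma(a,b)$ once more. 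The sequence $(z_n)$ is then $\gamma$-Cauchy since its consecutive distances are summable, so it converges to some $z\in C$ by completeness of $(C,\gamma)$. Finally $\gamma^H(y_n,z)=\gamma^H(\epsilon_n y_n,z)=\gamma^H(z_n,z)\le \gamma(z_n,z)\to 0$, so $(y_n)$ converges to $z$ in $\gamma^H$, and by the standard fact that a Cauchy sequence with a convergent subsequence converges, so does $(x_n)$. This yields completeness of $(C,\gamma^H)$.
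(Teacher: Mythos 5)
Your proof is correct, but it takes a genuinely different route from the paper. The paper disposes of $(ii)$ in one stroke by embedding $(C,\gamma^H)$ isometrically into a hyperspace: it passes to the quotient metric space $([C],d_\gamma)$, forms the complete space $(K([C]),d_\gamma^H)$ of compact subsets with the Hausdorff distance, and checks that $x\mapsto\lbrace \Gamma_\gamma(x),\Gamma_\gamma(-x)\rbrace$ is an isometry of $(C,\gamma^H)$ onto the closed subset $P([C])$ of symmetric pairs; this yields the pseudometric axioms and completeness simultaneously, at the cost of invoking completeness of the Hausdorff hyperspace and asserting (without detail) that $P([C])$ is closed. You instead verify everything by hand: the four-case chaining argument for the triangle inequality (each case correctly routes through $z$ or $-z$ using $\gamma(-a,b)=\gamma(a,-b)$), and then the completeness via a rapidly Cauchy subsequence with inductively chosen signs $\epsilon_n$ so that $(\epsilon_n y_n)$ becomes $\gamma$-Cauchy. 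Your sign-selection step is in fact the hidden content behind the paper's claim that $P([C])$ is closed, so your argument is more elementary and self-contained, while the paper's is shorter and reuses a standard completeness result. One minor point of care in your write-up: ``converges to some $z\in C$'' should be read in the pseudometric sense, i.e.\ $\gamma(z_n,z)\to 0$ for some representative $z$ of the limit class in $([C],d_\gamma)$, which is exactly what the paper's definition of a complete pseudometric space provides; with that reading, every step goes through.
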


\begin{proof} The part $(i)$ is trivial. To see $(ii)$, since $(C,\gamma)$ is a complete pseudometric space, then by definition $([C],d_\gamma)$ is a complete metric space. Let $(K([C]), d_\gamma^H)$ be the complete metric space of all compact subsets of $([C],d_\gamma)$ equipped with the Hausdorff distance. Consider the subset $P([C])$ of  compact subsets  of the form $\lbrace \Gamma_\gamma (x),\Gamma_\gamma (-x)\rbrace$, $x\in C$, that is,
$$P([C]):=\lbrace \lbrace \Gamma_\gamma (x),\Gamma_\gamma (-x)\rbrace: x\in C\rbrace \subset K([C]).$$
Clearly, the set $P([C])$ is a closed subset of $(K([C]), d^H_\gamma)$. It follows that $(P([C]), d^H_\gamma)$ is a complete metric subspace of $K([C])$. To see that $(C,\gamma^s)$ is a complete pseudometric space it suffices to observe that the mapping
\begin{eqnarray*}
\xi: (C,\gamma^s) &\to & (P([C]), d^H_\gamma)\\
       x &\mapsto& \lbrace \Gamma_\gamma (x),\Gamma_\gamma (-x)\rbrace,
\end{eqnarray*}
is $\gamma^s$-$d^H_\gamma$-isometric. Indeed, we see easily that for all $x, y \in C$
\begin{eqnarray*}
d^H_\gamma(\xi(x), \xi(y)) &=& d^H_\gamma(\lbrace \Gamma_\gamma (x),\Gamma_\gamma (-x)\rbrace, \lbrace \Gamma_\gamma (y),\Gamma_\gamma (-y)\rbrace)\\
&=&\min(d_\gamma(\Gamma_\gamma(x),\Gamma_\gamma(y)),d_\gamma(\Gamma_\gamma(x),\Gamma_\gamma(-y)))\\
&=& \min(\gamma(x,y), \gamma(x,-y))\\
&=& \gamma^s(x,y).
\end{eqnarray*}
Hence, $(C,\gamma^s)$ is a complete pseudometric space. 
\end{proof}

We prove in the following lemma that the map $\gamma_{\mathcal{P}}$ is well defined and that $(\mathcal{K}_C, \gamma_{\mathcal{P}})$ has the $w^*USP$ in $\mathcal{Z}^*$. 

\begin{lemma} \label{ZUSP} Let $X, W$ be  Banach spaces and $C\subset X$ be a norm bounded and symmetric set. Let $\gamma$ be a complete pseudometric on $C$ satisfying $\gamma(x,y)=\gamma(-x,-y)$ for all $x,y\in C$. Suppose that the identity mapping $i: (C,\gamma)\to (C,\textnormal{weak})$ is continuous and that $(C,\gamma)$ has the $USP$ in $X$ with a modulus $\omega_C(\cdot)$. Let $(\mathcal{Z}, \|\cdot\|_\mathcal{Z})$ be a Banach space included in $\mathcal{C}_b(C,W)$ and satisfying the following properties:

$(i)$   $\|\cdot\|_\mathcal{Z}\geq \|\cdot\|_{\infty}$.

$(ii)$ $\mathcal{Z}$ contains  the set of rank-one operators (which are continuous for the topology $\tau_\gamma$ induced by $\gamma$)  and is such that $\|x^*(\cdot)e\|_\mathcal{Z}\leq \|x^*\|\|e\|$, for all $x^*\in X^*$ and all $e\in W$.

 Then, the following assertions hold.

$(a)$  the set $\mathcal{K}_C$ is a subset of $\mathcal{Z}^*$,  

$(b)$ the space $(\mathcal{K}_C, \gamma_{\mathcal{P}})$ is a well defined complete pseudometric space,

$(c)$ the space $(\mathcal{K}_C, \gamma_{\mathcal{P}})$ has the $w^*USP$ in $\mathcal{Z}^*$ with a modulus $\frac{\omega_C(\cdot)}{2}$.
\end{lemma}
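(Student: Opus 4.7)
Part $(a)$ is immediate: for every $T\in\mathcal{Z}$ we have $|w^*(T(x))|\leq\|T(x)\|\leq\|T\|_\infty\leq\|T\|_{\mathcal Z}$, so each $w^*\circ\hat x$ belongs to the closed unit ball of $\mathcal Z^*$, giving $\mathcal K_C\subset B_{\mathcal Z^*}$.

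The main technical point is the well-definedness of $\gamma_{\mathcal P}$ in $(b)$. I would show that whenever $w^*\circ\hat x=v^*\circ\hat y$ as elements of $\mathcal Z^*$, then $\gamma^H(x,y)=0$. Testing the equality against the rank-one operators $z\mapsto x^*(z)e$, which belong to $\mathcal Z$ by hypothesis $(ii)$, yields $x^*(x)w^*(e)=x^*(y)v^*(e)$ for all $x^*\in X^*$ and $e\in W$; by the Hahn-Banach theorem this forces $w^*(e)\,x=v^*(e)\,y$ in $X$ for every $e\in W$. If $x=0$ or $y=0$ both vectors must vanish since $v^*,w^*\neq 0$. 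Otherwise, picking $e_0\in W$ with $w^*(e_0)\neq 0$ gives $x=\alpha y$ for $\alpha=v^*(e_0)/w^*(e_0)$; plugging back yields $v^*=\alpha w^*$, and from $\|v^*\|=\|w^*\|=1$ we get $\alpha=\pm 1$, hence $x=\pm y$ and $\gamma^H(x,y)=\min(\gamma(x,y),\gamma(x,-y))=0$. The pseudometric axioms for $\gamma_{\mathcal P}$ then transfer directly from those of $\gamma^H$ supplied by Proposition~\ref{Gprop1}. For completeness, identify the quotient metric space $([\mathcal K_C],d_{\gamma_{\mathcal P}})$ with $([C],d_{\gamma^H})$ via the map $[w^*\circ\hat x]\mapsto[x]_{\gamma^H}$: the argument above shows it is well defined and injective, the defining identity $\gamma_{\mathcal P}(w^*\circ\hat x,v^*\circ\hat y)=\gamma^H(x,y)$ makes it an isometry, and surjectivity is clear since any $[x]_{\gamma^H}$ is hit by $[w^*\circ\hat x]$ for an arbitrary $w^*\in S_{W^*}$. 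Since $([C],d_{\gamma^H})$ is complete by Proposition~\ref{Gprop1}, so is $([\mathcal K_C],d_{\gamma_{\mathcal P}})$.

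For $(c)$, let $M:=\sup_{z\in C}\|z\|$, fix $p=w^*\circ\hat x\in\mathcal K_C$ and $\varepsilon>0$. Set $\eta:=\omega_C(\varepsilon)/(2M)$ and pick $e\in B_W$ with $w^*(e)\geq 1-\eta$, which is possible since $\|w^*\|=1$. Let $f:=f_{x,\varepsilon}\in B_{X^*}$ be a witness of the $USP$ of $(C,\gamma)$ and define the rank-one operator
$$T_{p,\varepsilon}(z):=f(z)\,e,\qquad z\in C,$$
which lies in $B_{\mathcal Z}$ by hypothesis $(ii)$. For $q=v^*\circ\hat y\in\mathcal K_C$ with $\gamma_{\mathcal P}(p,q)\geq\varepsilon$, i.e.\ $\gamma^H(x,y)\geq\varepsilon$, we have both $\gamma(x,y)\geq\varepsilon$ and $\gamma(x,-y)\geq\varepsilon$; applying the $USP$ to $f$ at these two pairs (and using $-y\in C$ by symmetry) gives $f(x)-f(y)\geq\omega_C(\varepsilon)$ and $f(x)+f(y)\geq\omega_C(\varepsilon)$, hence $f(x)\geq\omega_C(\varepsilon)$ and $|f(y)|\leq f(x)-\omega_C(\varepsilon)$. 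Using $|v^*(e)|\leq 1$ and $0\leq f(x)\leq M$ we conclude
$$\langle p,T_{p,\varepsilon}\rangle-\langle q,T_{p,\varepsilon}\rangle=f(x)w^*(e)-f(y)v^*(e)\geq f(x)(1-\eta)-|f(y)|\geq\omega_C(\varepsilon)-\eta M=\frac{\omega_C(\varepsilon)}{2},$$
which is exactly the $w^*USP$ with modulus $\omega_C(\cdot)/2$. The only delicate step is the rank-one separation argument in $(b)$; the role of $\gamma^H$ in place of $\gamma$ is precisely to absorb the unavoidable $\pm$ ambiguity produced by $\|v^*\|=\|w^*\|=1$, after which the factor $1/2$ loss in $(c)$ comes only from the approximation $w^*(e)\geq 1-\eta$.
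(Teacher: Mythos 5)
Your proof is correct and follows essentially the same route as the paper's: part $(a)$ via $\|\cdot\|_{\mathcal Z}\geq\|\cdot\|_\infty$, well-definedness of $\gamma_{\mathcal P}$ by testing against rank-one operators to force $y=\pm x$, completeness transported from $(C,\gamma^H)$ via Proposition \ref{Gprop1}, and the $w^*USP$ via the rank-one operator $f_{x,\varepsilon}(\cdot)e$ with $w^*(e)$ within $\omega_C(\varepsilon)/(2M)$ of $1$. The only differences are cosmetic: the paper bounds $|f_{x,\varepsilon}(y)|$ directly from the symmetrized separation inequality, whereas you derive it by combining the two USP inequalities for $y$ and $-y$.
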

\begin{proof} First, notice that $X^*\subset \mathcal{C}_b(C,W)$ this is due to the fact that the topology $\tau_\gamma$ induced by $\gamma$ is finer than the weak-topology. Notice also that for all $x\in C$ and all $w^*\in S_{W^*}$, we have $w^*\circ \hat{x} \in \mathcal{Z}^*$ since $\|\cdot\|_\mathcal{Z}\geq \|\cdot\|_{\infty}$. Thus, $\mathcal{K}_C\subset \mathcal{Z}^*$. By assumption we have: for every $\varepsilon>0$ small enough, there exists $\omega_C(\varepsilon)>0$ and a familly $\mathcal{F}_{C}:=\lbrace x^*_{x,\varepsilon} \in B_{X^*}: x\in C, \varepsilon >0 \rbrace$ satisfying: for every $x,y \in C$, 
\begin{eqnarray*}
\gamma(x,y) \geq \varepsilon \Longrightarrow \langle x^*_{x,\varepsilon}, x \rangle - \omega_C(\varepsilon) \geq  \langle x^*_{x,\varepsilon}, y\rangle.
\end{eqnarray*} 
By the symmetry of $C$, we see that for every $x,y \in C$,
\begin{eqnarray} \label{Geq1}
\gamma^s(x,y) \geq \varepsilon \Longrightarrow \langle x^*_{x,\varepsilon}, x \rangle - \omega_C(\varepsilon) \geq  |\langle x^*_{x,\varepsilon}, y\rangle|.
\end{eqnarray} 
 Recall that $\gamma_{\mathcal{P}}$ is defined for all $w^*\circ \hat{x}, v^*\circ \hat{y}\in \mathcal{K}_C$ by:
$$\gamma_{\mathcal{P}}(w^*\circ \hat{x}, v^*\circ \hat{y}):=\gamma^s(x,y):=\min(\gamma(x,y), \gamma(x,-y)).$$
Let us prove that the map $\gamma_{\mathcal{P}}$ is well defined. To do this, it suffices to show that for $x, y, z, t\in C$ and $w^* , v^*, s^*, q^*\in S_{W^*}$, if $(w^*\circ \hat{x}, s^*\circ \hat{z})= (v^*\circ \hat{y}, q^*\circ \hat{t})$ then $\gamma^s(x,z)= \gamma^s(y,t)$. Indeed, since $\mathcal{Z}$ contains the set of rank-one operators, then from the equality $w^*\circ \hat{x}=v^*\circ \hat{y}$, we get that $x^*(x)w^*(e)=x^*(y)v^*(e)$, for all $x^*\in X^*$ and for all $e\in S_{W}$. This implies that $y=\pm x$.  In a similar way, we see that $t=\pm z$. Hence,  $\gamma^s(x,z)= \gamma^s(y,t)$ and so $\gamma_{\mathcal{P}}$ is well defined. Clearly, $(\mathcal{K}_C, \gamma_{\mathcal{P}})$ is a complete pseudometric space, since $(C, \gamma^s)$ is a complete pseudometric space, this follows from  Proposition \ref{Gprop1}. 

Now, we are going to prove that $(\mathcal{K}_C, \gamma_{\mathcal{P}})$ has the $w^*USP$ in $\mathcal{Z}^*$. 
Set $D:=\sup_{a\in C} \|a\|$ and let $\varepsilon>0$, $x\in C$ and $w^*\in S_{W^*}$. Choose $e_{\varepsilon, w^*}\in S_W$ such that $$w^*(e_{\varepsilon,w^*}) >1- \lambda(\varepsilon),$$ with $\lambda(\varepsilon):=\frac{\omega_C(\varepsilon)}{2D}$. We define $T_{\varepsilon, x, w^*}\in \mathcal{Z}$ by $T_{\varepsilon, x, w^*}:=x^*_{x,\varepsilon}(\cdot)e_{\varepsilon,w^*}$, where $x^*_{x,\varepsilon}\in \mathcal{F}_{C}$. We have $\|T_{\varepsilon, x, w^*}\|_\mathcal{Z}\leq \|x^*_{x,\varepsilon}\|\|e_{\varepsilon,w^*}\|\leq 1$ by the assumption $(ii)$. To see that the space $(\mathcal{K}_C, \gamma_{\mathcal{P}})$ has the $w^*USP$ in $\mathcal{Z}^*$, let $y\in C$ and $v^*\in S_{W^*}$ be such that $\gamma_{\mathcal{P}}(w^*\circ \hat{x}, v^*\circ \hat{y}):=\gamma^s(x,y)\geq \varepsilon$. Then, using the formula $(\ref{Geq1})$, we get:

\begin{eqnarray*}
|v^*\circ \hat{y}(T_{\varepsilon, x, w^*})|&=&|x^*_{x,\varepsilon}(y)v^*(e_{\varepsilon,w^*})|\\
                                                               &\leq& |x^*_{x,\varepsilon}(y)|\\
                                                               &\leq& x^*_{x,\varepsilon} (x) - \omega_C(\varepsilon)\\
&=& x^*_{x,\varepsilon} (x) w^*(e_{\varepsilon,w^*}) + x^*_{x,\varepsilon} (x) (1-w^*(e_{\varepsilon,w^*})) - \omega_C(\varepsilon)\\
&=& w^*\circ \hat{x}(T_{\varepsilon, x, w^*})+ x^*_{x,\varepsilon} (x) (1-w^*(e_{\varepsilon,w^*})) - \omega_C(\varepsilon)\\
&\leq& w^*\circ \hat{x}(T_{\varepsilon, x, w^*}) + D\lambda(\varepsilon) - \omega_C(\varepsilon)\\
                                                               &=& w^*\circ \hat{x}(T_{\varepsilon, x, w^*}) - \frac{\omega_C(\varepsilon)}{2}.
\end{eqnarray*}
Hence, by Definition \ref{USP}, $(\mathcal{K}_C, \gamma_{\mathcal{P}})$ is a complete pseudometric space having the $w^*USP$ in $\mathcal{Z}^*$ with a modulus $\frac{\omega_C(\varepsilon)}{2}$.
\end{proof}

\begin{lemma} \label{GgammaH} Let $X, W$ be Banach spaces and $C$ be a symmetric subset of $X$. Let $\gamma$ be a pseudometric on $C$ satisfying $\gamma(x,y)=\gamma(-x,-y)$ for all $x,y\in C$. Let $T\in \mathcal{C}_b(C,W)$. Then, $T$ attains $\gamma^s$-strongly its sup-norm  at some point $\bar{x}\in C$ (Definition \ref{Gdir1}) if and only if the function $\hat{T}: w^*\circ \hat{x}\mapsto w^*(T(x))$ is $\gamma_{\mathcal{P}}$-strongly maximized on $\mathcal{K}_C$ at some point $w^*_0\circ \hat{x}_0 \in\mathcal{K}_C$ (Definition \ref{Gdir}). In this case, we have $\bar{x}=\pm x_0$.
\end{lemma}
\begin{proof} Suppose that $\hat{T}$  is $\gamma_{\mathcal{P}}$-strongly maximized on $ \mathcal{K}_C $ at somme point $w^*_0\circ \hat{x}_0 \in \mathcal{K}_C$ . Let $(x_n)\subset C$ be such that $\|T(x_n)\|\to \sup_{x\in C}\|T(x)\|$. By the Hahn-Banach theorem, there exists a sequence $(w_n^*)\subset S_{W^*}$  such that $$\|T(x_n)\|=w_n^*(T(x_n)) \textnormal{ for all } n\in \N$$ and $$\sup_{x\in C}\|T(x)\|=\sup_{x\in C, w^*\in S_{W^*}}w^*(T(x)) =\sup_{w^*\circ \hat{x}\in \mathcal{K}_C}w^*(T(x)).$$ 
In consequence, we have
 $$w_n^*(T(x_n)) \to \sup_{w^*\circ \hat{x}\in \mathcal{K}_C}w^*(T(x)).$$
Since, $T$  is  $\gamma_{\mathcal{P}}$-strongly maximized on  $\mathcal{K}_C$ at $w^*_0\circ \hat{x_0} \in \mathcal{K}_C$, it follows  from Definition \ref{Gdir} that $\gamma_{\mathcal{P}}(w^*_n\circ \hat{x}_n, w^*_0\circ \hat{x}_0)\to 0$, that is, $\gamma^s(x_n,x)\to 0.$ 
It suffices now to see that $\sup_{x\in C}\|T(x)\|=\max(\|T(x_0)\|, \|T(-x_0)\|)$ after which we will take $\bar{x}=\pm x_0$. Indeed, since $\gamma^s(x_n,x)\to 0$,  there exists a subsequence $(x_{n_k})$ such that $\gamma(x_{n_k},x_0)\to 0$ or $\gamma(x_{n_k},-x_0) \to 0$. It follows by the continuity of $T$ that $$\|T(x_{n_k})\|\to \|T(x_0)\|\leq \max(\|T(x_0)\|, \|T(-x_0)\|)$$ or $$\|T(x_{n_k})\|\to \|T(-x_0)\|\leq \max(\|T(x_0)\|, \|T(-x_0)\|).$$ Since, $\|T(x_n)\|\to \sup_{x\in C}\|T(x)\|$, we get $$\sup_{x\in C}\|T(x)\|=\max(\|T(x_0)\|, \|T(-x_0)\|).$$  
Thus,  $T$ attains $\gamma^s$-strongly its sup-norm at $\bar{x}=\pm x_0\in C$. To see the converse, suppose that $T$ attains $\gamma^s$-strongly its sup-norm at $\bar{x}\in C$.  Let $(w^*_n\circ \hat{x}_n)\subset \mathcal{K}_C$ be a sequence  such that $w_n^*(T(x_n)) \to \sup_{w^*\circ \hat{x}\in \mathcal{K}_C}w^*(T(x))=\sup_{x\in C}\|T(x)\|$. Since $$w_n^*(T(x_n))\leq \|T(x_n)\|\leq \sup_{x\in C}\|T(x)\|,$$ 
then,  $$\|T(x_n)\| \to \sup_{x\in C}\|T(x)\|.$$ 
Using the fact that $T$ attains $\gamma^s$-strongly its sup-norm at $\bar{x} \in C$, we get that $\gamma^s(x_n,\bar{x})\to 0,$ which implies by the definition that $\gamma_{\mathcal{P}}(w^*_n\circ \hat{x}_n, w^*_0\circ \hat{\bar{x}})\to 0$, for any fixed $w^*_0\in S_{W^*}$. Hence, $\hat{T}$ is $\gamma_{\mathcal{P}}$-strongly maximized on $\mathcal{K}_C$.
\end{proof}

\begin{proof}[Proof of Theorem \ref{Gprince1}] Using Lemma \ref{ZUSP}, we know that $(\mathcal{K}_C, \gamma_{\mathcal{P}})$ is a complete pseudometric space having the $w^*USP$ in $\mathcal{Z}^*$. Let us fix $S\in \mathcal{C}_b(C,W)$. Thus, by applying \cite[Theorem 3.3]{Ba} to the function 
\begin{eqnarray*}
\hat{S}: (\mathcal{K}_C, \gamma_{\mathcal{P}}) &\to& \R\\
w^*\circ \hat{x}&\mapsto& w^*(S(x)), 
\end{eqnarray*}
we obtain that the set
\begin{eqnarray*}
\mathcal{H}(S):=\lbrace T\in \mathcal{Z}: \hat{S}+\hat{T} \textnormal{ is } \gamma_{\mathcal{P}}\textnormal{-strongly maximized on } \mathcal{K}_C\rbrace,
\end{eqnarray*}
is the complement of a $\sigma$-porous set on $(\mathcal{Z}, \|\cdot\|_\mathcal{Z})$. Using Lemma \ref{GgammaH}, we see that $\mathcal{G}(S)=\mathcal{H}(S)$. This concludes the first part of the theorem. Now, to prove the version of the Bishop-Phelps-Bollobás theorem, we apply \cite[Theorem 3.7]{Ba} with the space $(\mathcal{K}_C, \gamma_{\mathcal{P}})$. Indeed,  let $\varepsilon>0$, $\lambda(\varepsilon)=\frac{\varepsilon}{4} \omega_{\mathcal{K}_C}(\varepsilon/4)=\frac{\varepsilon}{8} \omega_C(\varepsilon/4)>0$, (where $\omega_{\mathcal{K}_C}$ is the modulus of uniform $w^*\mathcal{USP}$ of $(\mathcal{K}_C,\gamma_\mathcal{P})$ in $\mathcal{Z}^*$, see Lemma \ref{ZUSP}).  Let, $T\in Z$, $\|T\|_{\infty}=1$ and $x\in C$ such that 
\begin{eqnarray*}
\|T(x)\|>1-\frac{\varepsilon}{4} \omega_{\mathcal{K}_C}(\varepsilon/4)&=&\|T\|_{\infty}-\frac{\varepsilon}{4} \omega_{\mathcal{K}_C}(\varepsilon/4). 
\end{eqnarray*}
We have that $$1=\|T\|_{\infty}=\sup_{w^*\circ \hat{z}\in \mathcal{K}_C} w^*(T(z)):=\sup_{w^*\circ \hat{z}\in \mathcal{K}_C} \hat{T}(w^*\circ \hat{z}).$$ Moreover, there exists by the Hanh-Banach theorem an $w^*_x\in S_{W^*}$ such that 
$$ \hat{T}(w^*_x\circ \hat{x}):=w^*_x (T(x))= \|T(x)\|.$$ Thus, the above inequality can be written as follows:
\begin{eqnarray*}
\hat{T}(w^*_x\circ \hat{x}) > \sup_{w^*\circ \hat{z}\in \mathcal{K}_C}\hat{T}(w^*\circ \hat{z}) -\frac{\varepsilon}{4} \omega_{\mathcal{K}_C}(\varepsilon/4). 
\end{eqnarray*}
 We apply \cite[Theorem 3.7]{Ba}, with the function $\hat{T}$ (by changing the term ``minimized" by ``maximized") and the set $\mathcal{K}_C$ to obtain some $R\in \mathcal{Z}$ and a point $w^*_0 \circ \hat{x}_0\in \mathcal{K}_C$ such that

 $(\bullet)$ $\gamma_\mathcal{P}(w^*_0 \circ \hat{x}_0, w^*_x\circ \hat{x}):=\gamma^s(x_0,x)<\frac{\varepsilon}{4},$ 

$(\bullet)$ $\|R\|_{\infty}\leq \|R\|_{\mathcal{Z}}<\frac{\varepsilon}{2}$, 

$(\bullet)$ $\hat{T}-\hat{R}$  is $\gamma_\mathcal{P}$-strongly maximized on $\mathcal{K}_C$ at  the point $w^*_0 \circ \hat{x}_0$. 

This leads, thanks to Lemma \ref{GgammaH}, that $T-R$ attains $\gamma^s$-strongly its sup-norm at $\overline{x}=\pm x_0$. From the definition of $\gamma^s$,  we have $\gamma^s(\overline{x},x)=\gamma^s(x_0,x)<\frac{\varepsilon}{4}$. Set $S:=\frac{T-R}{\|T-R\|_{\infty}}$, then  $S$ attains $\gamma^s$-strongly its sup-norm at $\overline{x}$,  $\|S\|_{\infty}=1$  and using the triangular inequality, 
\begin{eqnarray*}
\|T-S\|_{\infty}=\|T- \frac{T-R}{\|T-R\|_{\infty}}\|_{\infty} &=& \|R+ (T-R-\frac{T-R}{\|T-R\|_{\infty}})\|_{\infty}\\
                          &\leq&  \|R\|_{\infty}+ \left \|\frac{1-\|T-R\|_{\infty}}{\|T-R\|_{\infty}}(T-R) \right \|_{\infty}\\
&=&  \|R\|_{\infty}+ |1-\|T-R\|_{\infty} |\\
&=& \|R\|_{\infty}+ |\|T\|_{\infty}-\|T-R\|_{\infty}|\\
&\leq& 2\|R\|_{\infty}\\
                          &<& \varepsilon. 
\end{eqnarray*}
This concludes the proof.
\end{proof}

By $\overline{\mathcal{F}(X,W)}$ we denote the closure of $\mathcal{F}(X,W)$ in $\mathcal{L}(X,W)$.

\begin{corollary} \label{cor0} Under the hypothesis of Theorem \ref{Gprince1}, let $(S_n)\subseteq \mathcal{C}_b(C,W)$ be a sequence. Then, for every $\varepsilon >0$, there exists a compact operator $T_\varepsilon \in \overline{\mathcal{F}(X,W)}$  such that $\|T_\varepsilon\|_{\infty}:=\sup_{x\in C}\|T(x)\|<\varepsilon$ and $S_n+T_\varepsilon$  attains  $\gamma^s$-strongly its sup-norm on $ C$ for every $n\in \N$.
\end{corollary}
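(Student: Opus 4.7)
The strategy is to invoke Theorem \ref{Gprince1} with the specific Banach space $\mathcal{Z} := \overline{\mathcal{F}(X,W)}$, equipped with the operator norm (assuming as usual that $C \subseteq B_X$, otherwise one rescales). Conditions $(i)$ and $(ii)$ of the theorem are immediate for this choice: every bounded linear operator is weakly continuous, hence $\gamma$-continuous on $C$ since the $\gamma$-topology refines the weak topology, so $\overline{\mathcal{F}(X,W)} \subseteq \mathcal{L}(X,W) \subseteq \mathcal{C}_b(C,W)$; for $C \subseteq B_X$ one has $\|T\|_\infty \leq \|T\|_{\mathrm{op}}$; and every rank-one operator $x^*(\cdot) e$ lies in $\mathcal{Z}$ with $\|x^*(\cdot) e\|_{\mathrm{op}} = \|x^*\|\,\|e\|$.

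Once the theorem applies to this $\mathcal{Z}$, part $(1)$ gives, for each $n \in \mathbb{N}$, that the set $\mathcal{G}(S_n) = \{T \in \mathcal{Z} : S_n + T \text{ attains } \gamma^H\text{-strongly its sup-norm on } C\}$ is the complement of some $\sigma$-porous set $P_n$ in $\mathcal{Z}$. The key combinatorial observation is that, by definition of $\sigma$-porosity, a countable union of $\sigma$-porous sets is again $\sigma$-porous. Hence $P := \bigcup_{n \in \mathbb{N}} P_n$ is $\sigma$-porous in $\mathcal{Z}$, and $\bigcap_n \mathcal{G}(S_n) = \mathcal{Z} \setminus P$ is the complement of a $\sigma$-porous set.

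Now I invoke the fact recalled in Section \ref{S1} that every $\sigma$-porous subset of a complete metric space is of first Baire category. Since $\mathcal{Z}$, being a closed subspace of the Banach space $\mathcal{L}(X,W)$, is itself a Banach space and hence a Baire space, $\mathcal{Z} \setminus P$ is residual in $\mathcal{Z}$, and in particular dense. Consequently the open ball $\mathring{B}_{\mathcal{Z}}(0, \varepsilon)$ meets $\mathcal{Z} \setminus P$. Choose any $T_\varepsilon$ in this intersection. Then $T_\varepsilon \in \overline{\mathcal{F}(X,W)}$ is a compact operator; by condition $(i)$ one has $\|T_\varepsilon\|_\infty \leq \|T_\varepsilon\|_{\mathrm{op}} < \varepsilon$; and membership of $T_\varepsilon$ in each $\mathcal{G}(S_n)$ means that $S_n + T_\varepsilon$ attains $\gamma^H$-strongly its sup-norm on $C$ for every $n$, as required.

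The proof is essentially a Baire-category wrapper around Theorem \ref{Gprince1}, so no step should present a serious obstacle. The only point requiring care is that the corollary's conclusion bounds $\|T_\varepsilon\|_\infty$ while the $\sigma$-porosity result is relative to $\|\cdot\|_{\mathcal{Z}}$; this is precisely what condition $(i)$ was designed to bridge, and it suffices to pick $T_\varepsilon$ in a small enough $\|\cdot\|_{\mathcal{Z}}$-ball around the origin.
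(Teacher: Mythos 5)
Your proposal is correct and follows essentially the same route as the paper: apply Theorem \ref{Gprince1} with $\mathcal{Z}=\overline{\mathcal{F}(X,W)}$, observe that the countable intersection $\bigcap_n\mathcal{G}(S_n)$ is still the complement of a $\sigma$-porous (hence first-category) set and therefore dense, and pick $T_\varepsilon$ in a small ball around the origin. Your extra care about verifying hypotheses $(i)$--$(ii)$ and bridging $\|\cdot\|_\infty$ versus $\|\cdot\|_{\mathcal{Z}}$ is left implicit in the paper but adds nothing structurally different.
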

\begin{proof}  We apply Theorem \ref{Gprince1}  to the closed subspace $\mathcal{Z}:=\overline{\mathcal{F}(X,Y)}$ of $\mathcal{L}(X,W)$. Thus, for each $n\in \N$, the set 
$$\mathcal{G}(S_n):=\lbrace T\in \overline{\mathcal{F}(X,Y)}: S_n+ T \textnormal{  attains } \gamma^s\textnormal{-strongly its sup-norm on } C\rbrace,$$
is the complement of a $\sigma$-porous subset of $\mathcal{Z}$.  Thus,  $\cap_{n\in \N} \mathcal{G}(S_n)$ is also the complement of a $\sigma$-porous subset of $\mathcal{Z}$, in particular it is a dense subset. Hence, for every $\varepsilon >0$, there exists $T_\varepsilon \in \cap_{n\in \N} \mathcal{G}(S_n)\subseteq \overline{\mathcal{F}(X,Y)}$ such that $\|T_\varepsilon\|_{\infty}<\varepsilon$ and $S_n+T_\varepsilon$  attains  $\gamma^s$-strongly its sup-norm on $C$ for every $n\in \N$.
\end{proof}

\paragraph{\bf A strong Bishop-Phelps property.} We obtain immediately the following consequence from Theorem \ref{Gprince1} in the case where $C$ is equipped with the norm $\|\cdot\|$ and  $\mathcal{Z}$ is a closed subset of $\mathcal{L}(X,Y)$ containing the set of rank-one operators. Thus, the closed convex hull of a bounded closed symmetric set with the $USP$ has a {\it ``strong Bishop-Phelps"} property. With $X$ a uniformly convex space and $C=S_X$, the following proposition recover and extend the result in \cite[Theorem 3.1]{KL}. By the pseudometric $\|\cdot\|^s$ on $C$, we denote the pseudometric $\gamma^s$ where $\gamma(x,y)=\|x-y\|$ for all $x,y\in C$.

\begin{proposition} \label{Gprince} Let $X$ be a Banach space and $C\subset X$ be a nonempty bounded, closed and symmetric subset such that $(C,\|\cdot\|)$ has the $USP$ with a modulus $\omega_C(\cdot)$. Then, for every Banach space $Y$ and for every closed subspace $\mathcal{Z}$ of $\mathcal{L}(X,Y)$ containing the set of rank-one operators, the following assertions hold.

$(i)$ For  every $S\in \mathcal{C}_b(C, Y)$, the set 
\begin{eqnarray*}
\mathcal{G}(S) &:=&\lbrace T\in \mathcal{Z}: S+T \textnormal{ attains } \|\cdot\|^s\textnormal{-strongly its sup-norm on } C\rbrace,
\end{eqnarray*}
is the complement of a $\sigma$-porous set of $\mathcal{Z}$ (in particular, with $S=0$ we have that $C$ and also $\overline{\textnormal{co}}(C)$ have  the Bishop-Phelps property). 

$(ii)$ The following version of the Bishop-Phelps-Bollobás theorem holds: for every $\varepsilon>0$, there exists $\lambda(\varepsilon):=\frac{\varepsilon}{8} \omega_C(\varepsilon/4) >0$ such that for every $T\in \mathcal{Z}$, $\|T\|_{\infty}=1$ and every $x\in C$ satisfying $\|T(x)\|>1-\lambda(\varepsilon),$ there exists $S \in \mathcal{Z}$ and $\overline{x}\in C$ such that: 

$(a)$ $\|S\|_{\infty}=\|S(\bar{x})\|=1$ and $S$ attains $\|\cdot\|^s$-strongly its sup-norm on $C$ at $\bar{x}$.

$(b)$ $\|\bar{x}-x\|<\varepsilon \textnormal{ and } \|T-S\|_{\infty}<\varepsilon.$
\end{proposition}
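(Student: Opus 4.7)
The plan is to deduce Proposition~\ref{Gprince} as a direct specialization of Theorem~\ref{Gprince1}, taking $W := Y$, the pseudometric $\gamma(x,y) := \|x-y\|$ on $C$, and $\mathcal{Z}$ to be the closed subspace of $\mathcal{L}(X,Y)$ given in the statement, endowed with the operator norm. With this choice $\gamma^H$ coincides with the pseudometric $\|\cdot\|^H$ introduced just before the proposition, so the notation matches.

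I would first check that the hypotheses of Theorem~\ref{Gprince1} are satisfied. The metric $\|\cdot-\cdot\|$ on $C$ is complete since $C$ is norm-closed in the Banach space $X$; it satisfies the sign invariance $\gamma(-x,-y) = \gamma(x,y)$ trivially; and the identity $(C,\|\cdot\|) \to (C,\textnormal{weak})$ is continuous because the norm topology is finer than the weak topology. The $USP$ with modulus $\omega_C$ is exactly the hypothesis. For the space $\mathcal{Z}$, after a harmless rescaling of $C$ to lie in $B_X$, the operator norm dominates the sup-norm on $C$, yielding hypothesis~$(i)$; hypothesis~$(ii)$ holds because $\mathcal{Z}$ contains rank-one operators by assumption and $\|x^*(\cdot)e\|_{\mathcal{L}(X,Y)} = \|x^*\|\|e\|$. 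Conclusion~$(1)$ of Theorem~\ref{Gprince1} then gives statement~$(i)$ of the proposition. For the parenthetical about $\overline{\textnormal{co}}(C)$, linearity of $T \in \mathcal{Z}$ and the convexity of $x \mapsto \|T(x)\|$ force $\sup_{\overline{\textnormal{co}}(C)}\|T(\cdot)\| = \sup_{C}\|T(\cdot)\|$, so any $T$ attaining its sup on $C$ attains it on $\overline{\textnormal{co}}(C)$ as well, and the complement-of-$\sigma$-porous property transfers.

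For part~$(ii)$ I would invoke conclusion~$(2)$ of Theorem~\ref{Gprince1} with the same data, obtaining $S \in \mathcal{Z}$ with $\|S\|_\infty = 1$ and $\|T-S\|_\infty < \varepsilon$, together with a point $\bar{x}\in C$ at which $S$ attains its sup-norm $\gamma^H$-strongly and $\gamma^H(\bar{x},x) < \varepsilon$. Condition~$(a)$ of the proposition is then immediate from Definition~\ref{Gdir1}, which forces $\|S(\bar{x})\| = \|S\|_\infty = 1$. The only nontrivial step, and therefore the one I expect to be the main (mild) obstacle, is upgrading the inequality $\gamma^H(\bar{x},x) = \min(\|\bar{x}-x\|,\|\bar{x}+x\|) < \varepsilon$ to $\|\bar{x}-x\| < \varepsilon$. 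For this I would use a sign-flip: if the minimum is realized by $\|\bar{x}+x\|$, replace $\bar{x}$ by $-\bar{x}$, invoking Proposition~\ref{Gprop1}$(i)$ to see that $\gamma^H$ is invariant under sign change of either argument, and using linearity of $S$ to conclude that $-\bar{x}\in C$ remains a $\gamma^H$-strong maximizer of $\|S(\cdot)\|$. Beyond this small translation, the argument is bookkeeping, which is why the proposition is essentially an immediate specialization of Theorem~\ref{Gprince1}.
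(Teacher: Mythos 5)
Your proof is correct and takes essentially the same route as the paper, which obtains Proposition~\ref{Gprince} as an immediate specialization of Theorem~\ref{Gprince1} with $W=Y$, $\gamma(x,y)=\|x-y\|$ and $\mathcal{Z}\subset\mathcal{L}(X,Y)$ with the operator norm. Your additional care in normalizing $C$ into $B_X$ for hypothesis $(i)$ and in performing the sign-flip $\bar{x}\mapsto-\bar{x}$ to upgrade $\gamma^H(\bar{x},x)<\varepsilon$ to $\|\bar{x}-x\|<\varepsilon$ only makes explicit details the paper leaves implicit.
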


\begin{remark} The above proposition implies that the closed unit ball $B_X$ has the ``strong" Bishop-Phelps property and also the Bishop-Phelps-Bollobás property whenever $B_X=\overline{\textnormal{co}}(C)$ for some bounded closed symmetric set $C$ having the $USP$. In particular, the closed unit ball of a uniformly convex Banach space satisfies these properties since $(S_X,\|\cdot\|)$ has the $USP$ (\cite[Proposition 2.5]{Ba}) or more generally if $B_X=\overline{\textnormal{co}}(S)$ for some set $S\subset S_X$ of uniformly strongly exposed points (ex. $X=\ell_1$, see \cite{Sch}). This situation will be further explored in Section \ref{U} and Section \ref{Sd}, in order to provide some extensions to the subject of norm attaining operators (Theorem ~\ref{cor1}, Corollary ~\ref{cor03} and Corollary ~\ref{cor2}).
\end{remark}

\section{Banach spaces with property $n.u.s.$} \label{U}

Recall that a point $x\in K\subset X$ is a strongly exposed point if there exists a functional $f\in B_{X^*}$ such that $f$ has a maximum on $K$ at $x$ and for every sequence $(x_n)\subset K$, we have $\|x_n-x\|\to 0$ whenever $f(x_n)\to f(x)$. The set of all strongly exposed points of a set $K$ is denoted $\textnormal{se}(K)$.  It is easy to see that if $S$ is uniformly strongly exposed set (see the definition in the introduction), then $\overline{S}$ is also a uniformly strongly exposed set (see \cite[Fact 4.1]{CGMR}). Thus,  it is quite easy to give an example of bidimensional space not satisfying the property $n.u.s.e.$ 
\begin{example} \label{Nfinite}
Let $A:=B((0,1), 1)$ be the closed Euclidean norm in $\R^2$ centered at $(0,1)$ with radius $1$. Let $\mathcal{N}$ be the norm whose closed unit ball is $$\B_{\mathcal{N}}:=\textnormal{co}(A\cup - A).$$  We see easily that $\textnormal{se}(B_{\mathcal{N}})=\lbrace \pm (x,y)\in \R^2: x\in ]-1,1[, y\in]1,2]\rbrace$. But the point $(1,1)\in \overline{\textnormal{se}(B_{\mathcal{N}})}$ is not an exposed point of $\B_{\mathcal{N}}$. Hence, $\textnormal{se}(B_{\mathcal{N}})$ cannot be a set of  uniformly strongly exposed points. Consequently, $(\R^2,\mathcal{N})$ does not have  the property $n.u.s.e.$ 
\end{example}

As stated in the introduction, it is easy to see from the definitions that the property $n.u.s.e$ implies the property $n.u.s.$ but the converse is not true even in dimension two.  Indeed, we prove in  Proposition \ref{finite} below, that every two-dimensional normed space has  the property $n.u.s$, while we know from  Example \ref{Nfinite} that this is not the case for the property $n.u.s.e$. We also prove in Proposition \ref{Gprince001} that  the property $n.u.s.$ is stable under $\ell_1$-sum, when the spaces have a same modulus of uniform separation.  

Let $X$ be a real Banach space, $A$ be a nonempty subset of $X$, $f\in X^*$ such that $\|f\|=1$ and $\alpha >0$. We define,
$$S(f,\alpha, A)=\left\{x\in A: f(x) > \sup_{a\in A} f(a) -\alpha \right\}.$$
Such a set is called a slice of $A$ and the fact that $\|f\|=1$ and $\alpha >0$
will always be understood when we refer to a “slice of $A$.” 

A point $x \in A \subseteq X$ is called a denting point
of $A$ if for each $\varepsilon >0$ there is a slice $S(f, \alpha, A)$ of diameter less than $\varepsilon$
which contains $x$ and is determined by a functional $f\in X^*$. A strongly exposed point is a denting point, but the converse is not true in general. We denote $\textnormal{dent}(A)$, the set of all denting points of $A$ and $\textnormal{ext}(A)$ the set of all extreme points of $A$. It is well-known that in finite dimensional normed space $X$, we have $\textnormal{ext}(B_X)=\textnormal{dent}(B_X)$ (see \cite{LLT}).
\vskip5mm

\begin{proposition} \label{finite} Let $K$ be a nonempty convex compact set of a Banach space $X$. Suppose that $\textnormal{dent}(K)$ is closed (compact), then  $(\textnormal{dent}(K),\|\cdot\|)$ has the $\textnormal{u.s.p}$ in $X$. In consequence, if $X$ is a finite dimensional normed space such that $\textnormal{ext}(B_X)$ is closed, then $X$ has the property $n.u.s$ (in particular, a two-dimensional space always has the property $n.u.s$).
\end{proposition}
\begin{proof} We know that $\textnormal{dent}(K)\neq \emptyset$ (see for instance \cite[Proposition 3.]{Ba1}). By the definition of denting points, for every $x\in \textnormal{dent}(K)$ and every $\varepsilon >0$, there exists a slice $S(p_{x, \varepsilon}, \alpha(x,\varepsilon),A)$ of diameter less than $\frac{\varepsilon}{2}$ which contains $x$. Since $x\in S(p_{x, \varepsilon}, \alpha(x,\varepsilon),A)$, we have $p_{x, \varepsilon}(x)> \sup_{a \in K} p_{x, \varepsilon}(a) - \alpha(x,\varepsilon)$. Let $y\in K$ be such that $\|y-x\|\geq \varepsilon$. Then, $y \not \in S(p_{x, \varepsilon}, \alpha(x,\varepsilon),A)$, so $p_{x, \varepsilon}(y)\leq \sup_{a \in K} p_{x, \varepsilon}(a) - \alpha(x, \varepsilon)$.  It follows that, $p_{x, \varepsilon}(x)> \sup_{a \in K} p_{x, \varepsilon}(a) - \alpha(x,\varepsilon)\geq p_{x, \varepsilon}(y)$. In other words, for every $x\in \textnormal{dent}(K)$ and every $\varepsilon >0$, there are $p_{x, \varepsilon}\in S_{X^*}$ and $\delta(x, \varepsilon)>0$ such that for all $y\in K$, 
	\begin{eqnarray} \label{Q1}
		\|x-y\| \geq \varepsilon \Longrightarrow p_{x, \varepsilon}(x) -\delta(x, \varepsilon) \geq p_{x, \varepsilon}(y).
	\end{eqnarray}
	Set $\Delta(x,\varepsilon):=\frac{1}{2}\min(\delta(x, \varepsilon), \varepsilon)>0$ for all $\varepsilon >0$ and for all $x\in \textnormal{dent}(K)$. Clearly, we have for every $\varepsilon>0$, 
	$$\textnormal{dent}(K)\subset \bigcup \left\lbrace \mathring{B}(x, \Delta(x,\varepsilon)): x\in \textnormal{dent}(K) \right\rbrace,$$
	where, $\mathring{B}(x,r)$ denotes the open ball centered at $x$ with radius $r>0$. Since $\textnormal{dent}(K)$ is a compact set, there exists an integer number $n(\varepsilon)\in \N$ and points  $a_1,...,a_{n(\varepsilon)} \in \textnormal{dent}(K)$, such that $$\textnormal{dent}(K)\subset \cup_{i=1}^{n(\varepsilon)} \mathring{B}(a_i, \Delta(a_i,\varepsilon)).$$  
	For every  $\varepsilon >0$ and  every $x\in \textnormal{dent}(K)$, choose an index $1\leq i(x,\varepsilon)\leq n(\varepsilon)$ such that $$\|x-a_{i(x,\varepsilon)}\|< \Delta(a_{i(x,\varepsilon)},\varepsilon).$$ 
	Set
	$$F_{\textnormal{dent}(K)}:=\lbrace p_{a_{i(x,\varepsilon)}, \varepsilon}: x\in \textnormal{dent}(K), \varepsilon >0\rbrace.$$
	Let us define $\omega(\varepsilon):=\frac{1}{2}\min_{1\leq i\leq n(\frac{\varepsilon}{2})} \lbrace \delta(a_i, \frac{\varepsilon}{2})\rbrace >0$ for every $\varepsilon >0$.  We are going to prove that $(\textnormal{dent}(K), \|\cdot\|)$ has the $\textnormal{u.s.p}$ in $X$ with the modulus $\omega$ and the familly $F_{\textnormal{dent}(K)}$. Indeed, let $x\in \textnormal{dent}(K)$ and  $y\in K$ such that $\|x-y\|\geq 2\varepsilon$. Then, 
	$$\|a_{i(x,\varepsilon)} -y\|\geq \|x-y\|- \|a_{i(x,\varepsilon)}-x\|\geq 2\varepsilon -  \Delta(a_{i(x,\varepsilon)}, \varepsilon) \geq \varepsilon.$$ This implies by $(\ref{Q1})$ that 
	\begin{eqnarray} \label{Q3}
		p_{a_{i(x,\varepsilon)}, \varepsilon}(a_{i(x,\varepsilon)})  -\delta(a_{i(x,\varepsilon)}, \varepsilon) \geq p_{a_{i(x,\varepsilon)}, \varepsilon}(y).
	\end{eqnarray}
	Now, since $\|x-a_{i(x,\varepsilon)}\|< \Delta(a_{i(x,\varepsilon)},\varepsilon)\leq \frac{\delta(a_{i(x,\varepsilon)}, \varepsilon)}{2}$ and $p_{a_{i(x,\varepsilon)},\varepsilon}\in S_{X^*}$, we obtain  that
	\begin{eqnarray} \label{Q4}
		p_{a_{i(x,\varepsilon)},\varepsilon}(a_{i(x,\varepsilon)})\leq p_{a_{i(x,\varepsilon)},\varepsilon}(x) + \frac{\delta(a_{i(x,\varepsilon)}, \varepsilon)}{2}.
	\end{eqnarray}
	Thus, by combining $(\ref{Q3})$ and $(\ref{Q4})$, we get  
	\begin{eqnarray*} 
		p_{a_{i(x,\varepsilon)},\varepsilon}(x) -\frac{\delta(a_{i(x,\varepsilon)}, \varepsilon)}{2}\geq p_{a_{i(x,\varepsilon)},\varepsilon}(y).
	\end{eqnarray*}
	It follows from the definition of $\omega(\varepsilon)$ (since $1\leq i(x,\varepsilon) \leq n(\varepsilon)$) that
	\begin{eqnarray*} 
		p_{a_{i(x,\varepsilon)},\varepsilon}(x) -\omega(2\varepsilon) \geq p_{a_{i(x,\varepsilon)},\varepsilon}(y),
	\end{eqnarray*}
	where $\omega$ is a modulus not depending on $x$ and $y$. Thus, $(\textnormal{dent}(K),\|\cdot\|)$ has the $\textnormal{u.s.p}$ in $X$. The last part of the proposition is clear.
\end{proof}

We prove now that property $n.u.s.$ is stable under $\ell_1$-sum.

\begin{proposition} \label{Gprince001} For all $\lambda \in \Lambda $ (an index set), let $(X_\lambda, |\cdot|_\lambda)$ be a  Banach spaces having the property $n.u.s.$  with a modulus $\omega_\lambda(\cdot)$. Assume that $\omega(\varepsilon):=\inf_{\lambda \in \Lambda} \omega_\lambda (\varepsilon)>0$ for every $\varepsilon >0$. Then, the Banach space $X=\left (\underset{\lambda\in \Lambda}{\bigoplus} X_\lambda \right)_{\ell_1}$ has the property $n.u.s.$
\end{proposition}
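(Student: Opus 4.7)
The plan is to construct the witnessing symmetric set $C\subset S_X$ as the union of the canonical embeddings of the witnessing sets from each summand, and to build the separating functionals on $X^*=\left(\bigoplus_{\lambda}X_\lambda^*\right)_{\ell_\infty}$ by extending the $X_\lambda$-functionals by zero across the other coordinates. Concretely, for each $\lambda\in\Lambda$, property $n.u.s.$ in $X_\lambda$ provides a symmetric set $C_\lambda\subset S_{X_\lambda}$ with $B_{X_\lambda}=\overline{\textnormal{co}}(C_\lambda)$ together with a family $\{f^\lambda_{z,\varepsilon}\}_{z\in C_\lambda,\,\varepsilon>0}\subset B_{X_\lambda^*}$ realizing the $USP$ on $(C_\lambda,|\cdot|_\lambda)$ with modulus $\omega_\lambda$. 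Writing $i_\lambda:X_\lambda\hookrightarrow X$ for the canonical isometric embedding, I would set
\[C:=\bigcup_{\lambda\in\Lambda} i_\lambda(C_\lambda)\subset S_X,\]
which is symmetric and in which each nonzero element sits in a unique $i_\lambda(C_\lambda)$. The identity $B_X=\overline{\textnormal{co}}(C)$ is routine: any $x=(x_\lambda)_\lambda\in B_X$ satisfies $\sum_\lambda|x_\lambda|_\lambda\leq 1$, and is approximated by finite convex combinations of the normalized components $i_\lambda(x_\lambda/|x_\lambda|_\lambda)\in i_\lambda(S_{X_\lambda})\subset\overline{\textnormal{co}}(C)$ together with $0\in\overline{\textnormal{co}}(C)$ (the latter coming from the symmetry of $C$).

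For the functionals, to each $x=i_\lambda(z)\in C$ and $\varepsilon>0$ I attach the element $F_{x,\varepsilon}\in X^*$ whose $\lambda$-th coordinate is $f^\lambda_{z,\varepsilon}$ and whose other coordinates vanish, so that $\|F_{x,\varepsilon}\|_{X^*}\leq 1$. Using the hypothesis $\omega(\varepsilon):=\inf_\lambda\omega_\lambda(\varepsilon)>0$, I declare the candidate sum-space modulus to be $\omega(\varepsilon)/2$.

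Verifying $USP$ for $(C,\|\cdot\|_X)$ with these choices is a two-case argument for $\varepsilon\in(0,2]$ (the relevant range, since $C$ has diameter at most $2$). Take $x=i_\lambda(z),\, y=i_\mu(w)\in C$ with $\|x-y\|_X\geq\varepsilon$. If $\mu=\lambda$, then $\|x-y\|_X=|z-w|_\lambda\geq\varepsilon$ and the $USP$ in $X_\lambda$ yields $f^\lambda_{z,\varepsilon}(z)-\omega_\lambda(\varepsilon)\geq f^\lambda_{z,\varepsilon}(w)$, which is stronger than the desired $\langle F_{x,\varepsilon},x\rangle-\omega(\varepsilon)/2\geq\langle F_{x,\varepsilon},y\rangle$ since $\omega(\varepsilon)/2\leq\omega_\lambda(\varepsilon)$. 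If $\mu\neq\lambda$, then $\langle F_{x,\varepsilon},y\rangle=0$ and the desired inequality reduces to $f^\lambda_{z,\varepsilon}(z)\geq\omega(\varepsilon)/2$. Applying the $USP$ in $X_\lambda$ to the pair $(z,-z)$, which is admissible because $-z\in C_\lambda$ by symmetry and $|z-(-z)|_\lambda=2\geq\varepsilon$, gives $f^\lambda_{z,\varepsilon}(z)-\omega_\lambda(\varepsilon)\geq -f^\lambda_{z,\varepsilon}(z)$, hence $f^\lambda_{z,\varepsilon}(z)\geq\omega_\lambda(\varepsilon)/2\geq\omega(\varepsilon)/2$, as required.

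The delicate point is the cross-summand case $\mu\neq\lambda$. The $USP$ definition does \emph{not} in general guarantee any uniform positive lower bound on $f^\lambda_{z,\varepsilon}(z)$, and without such a bound the naive functional $F_{x,\varepsilon}$, which vanishes on $y$ when $\mu\neq\lambda$, would fail to separate. The trick is that the symmetry of $C_\lambda$ combined with the common-modulus hypothesis $\inf_\lambda\omega_\lambda(\varepsilon)>0$ converts the $USP$ applied to $(z,-z)$ into the uniform lower bound $f^\lambda_{z,\varepsilon}(z)\geq\omega(\varepsilon)/2$; this is precisely the quantity that becomes the sum-space modulus, and it is also precisely the point at which the hypothesis that the $\omega_\lambda$ admit a positive common lower bound cannot be dispensed with.
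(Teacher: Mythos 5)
Your proposal is correct and follows essentially the same route as the paper's proof: the witnessing set is the union of the summands' sets, the functionals are extended by zero through the coordinate projections, and the cross-summand case is handled by applying the $USP$ to the antipodal pair $(z,-z)$ to get the uniform lower bound $f^\lambda_{z,\varepsilon}(z)\geq \omega_\lambda(\varepsilon)/2\geq \omega(\varepsilon)/2$, yielding the modulus $\omega(\varepsilon)/2$. Your explicit restriction to $\varepsilon\in(0,2]$ and your remark on why the common-modulus hypothesis is indispensable make the same points the paper leaves implicit.
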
 

\begin{proof} For each  $\lambda \in \Lambda$,  let $C_\Lambda$ be a set having the $USP$ in $X_\lambda$ with the modulus $\omega_\lambda$ and satisfying $B_{X_\lambda}=\overline{\textnormal{co}}^{|\cdot|_\lambda}(C_\lambda)$. We are going to prove that the set $C:=\cup_{\lambda \in \Lambda} C_\lambda$ has the the $USP$ in $X$ with the modulus $\omega (\cdot) =\inf_{\lambda \in \Lambda} \omega_\lambda (\cdot)$ and satisfies  $B_X=\overline{\textnormal{co}}^{\|\cdot\|}(C)$. 
Notice that $C_\lambda \cap C_\beta=\emptyset$, whenever $\lambda \neq \beta$ (since $C_\lambda\subset S_{X_\lambda}$ for all $\lambda \in \Lambda$ and $X_\lambda \cap X_\beta=\lbrace 0\rbrace$ for all $\lambda, \beta \in \Lambda$ , $\lambda \neq \beta$).

Let $x\in \cup_{\lambda \in \Lambda} C_\lambda $ be a fixed point. There exists a unique index $i_{x}\in \Lambda $ such that $x=x_{i_x} \in C_{i_x}$. Since $(C_{i_x}, |\cdot|_{i_x})$ has the $USP$ in $X_{i_x}$ with the modulus $\omega_{i_x}(\cdot)$, there exists a familly $\lbrace f_{i_x, a,\varepsilon} \in B_{X^*_{i_x}}: a\in C_{i_x}, \varepsilon >0 \rbrace$ satisfying: for every $a,b \in C_{i_x}$, 
\begin{eqnarray} \label{RT}
|a-b|_{i_x} \geq \varepsilon \Longrightarrow \langle f_{i_x,a,\varepsilon}, a \rangle - \omega_{C_{i_x}}(\varepsilon) \geq  \langle f_{i_x, a,\varepsilon}, b\rangle.
\end{eqnarray} 
In particular, we have for all $a\in C_{i_x}$ (with $b=-a$, since $C_{i_x}$ is symmetric),
\begin{eqnarray} \label{RT1}
 \langle f_{i_x, a,\varepsilon}, a \rangle \geq  \frac{\omega_{C_{i_x}}(\varepsilon)}{2}\geq \frac{\omega(\varepsilon)}{2}.
\end{eqnarray}
Let $\pi_\beta: X \to X_\beta$ be the canonical projection mapping defined by $\pi_\beta(z)=z_\beta$ for all $z=\underset{\lambda\in \Lambda}{\oplus} z_\lambda \in X$. We define $l^*_{x,\varepsilon}\in B_{X^*}$ by $$l^*_{x,\varepsilon}(z)=f_{i_x, x_{i_x},\varepsilon}(\pi_{i_x}( z)), \hspace{2mm} \forall z\in X.$$ 
Now, we prove that the the familly $\lbrace l^*_{x,\varepsilon} \in B_{X^*}: x\in C, \varepsilon >0\rbrace$ satisfies the following property: $\forall x, y \in C$
\begin{eqnarray} \label{RT2}
\|x-y\|\geq \varepsilon \Longrightarrow \langle l^*_{x,\varepsilon}, x \rangle - \frac{\omega (\varepsilon)}{2} \geq  \langle l^*_{x,\varepsilon}, y\rangle.
\end{eqnarray} 
Indeed, there are  unique indexes $i_x, i_y \in \Lambda$ such that $x=x_{i_x}\in C_{i_x}$ and $y=y_{i_x} \in C_{i_y}$. If $i_x=i_y$, then $x_{i_x}, y_{i_x}\in C_{i_x}$,  $\|x-y\|=|x_{i_x}-y_{i_x}|_{i_x}$, $\langle l^*_{x,\varepsilon}, x \rangle=f_{i_x, x_{i_x},\varepsilon}(x_{i_x})$ and $\langle l^*_{x,\varepsilon}, y \rangle=f_{i_x,x_{i_x},\varepsilon}(y_{i_x})$. Thus, $(\ref{RT2})$ is satisfied thanks to $(\ref{RT})$. If $i_x\neq i_y$, then necessarily $y_{i_x}=\pi_{i_x}(y)=0$ and so $(\ref{RT2})$ is also satisfied thanks to the remark in $(\ref{RT1})$. Hence, in all cases $(\ref{RT2})$ is satisfied which means that $(C,\|\cdot\|)$ has the the $USP$ in $X$ with the modulus $\frac{\omega(\varepsilon)}{2}$. Now, we see easily that $B_X=\overline{\textnormal{co}}^{\|\cdot\|}(C)$. Indeed, $B_X=\overline{\textnormal{co}}^{\|\cdot\|}(\cup_{\lambda \in \Lambda} B_{X_\lambda})=\overline{\textnormal{co}}^{\|\cdot\|}(\cup_{\lambda \in \Lambda} \overline{\textnormal{co}}^{|\cdot|_\lambda}(C_\lambda))=\overline{\textnormal{co}}^{\|\cdot\|}(C)$. 

\end{proof}

\begin{example} \label{Exnus} Let $\Lambda$ be an index set.

$(i)$  With $X_\lambda =H$ for all $\lambda \in \Lambda$ where $H$ is a uniformly convex Banach space, the space $X=\left (\underset{\lambda\in \Lambda}{\bigoplus} X_\lambda \right)_{\ell_1}$, which is not uniformly convex, has the property $n.u.s.$

$(ii)$ For every norm $\|\cdot\|$ on $\R^2$, the space $\left (\underset{\lambda\in \Lambda}{\bigoplus} \R^2\right)_{\ell_1}$ has the property $n.u.s.$ (see Proposition \ref{finite} and Proposition \ref{Gprince001}), but does not have the property $n.u.s.e.$ in general (see Example \ref{Nfinite}).
\end{example}
\noindent Recall  the property quasi-$\alpha$ introduced by Choi-Song  \cite{CS} which, in spite of being weaker than property $\alpha$, still implies property $(A)$.  A Banach space $X$ has property quasi-$\alpha$ if there exist $\lbrace x_\lambda: \lambda \in \Lambda \rbrace$, $\lbrace x^*_\lambda: \lambda \in \Lambda \rbrace$, subsets of $X$ and $X^*$  respectively and a map $\rho: \Lambda \to \R$ such that 

$(i)$ $\|x_\lambda\|=\|x^*_\lambda\|=x^*_\lambda (x_\lambda) =1$ for all $\lambda \in \Lambda$.

$(ii)$  $|x^*_\lambda (x_\mu)|\leq \rho(\mu)<1,$ for all $\lambda \neq \mu$,

$(iii)$ For every $e \in \textnormal{ext} (B_{X^{**}})$, there exists a subset $\Lambda_e \subset \Lambda$ such that either $e$ or $-e$ belong to $\overline{\Lambda_e}^{w^*}$ and $r_e := \sup \lbrace \rho(\mu): x_\mu \in \Lambda_e \rbrace < 1$.

It follows that if $\lbrace x_\lambda: \lambda \in \Lambda \rbrace$ witnesses that $X$ has property quasi-$\alpha$, then
$$B_X=\overline{\textnormal{co}}(\lbrace x_\lambda: \lambda \in \Lambda \rbrace).$$
A Banach space $X$ has the property $\alpha$ (W. Schachermayer \cite{Sch}) if the map $\rho(\cdot)$ in the above definition is equal to a constant $r\in [0,1[$. Note that property quasi-$\alpha$ is stable under finite $\ell_1$-sum (see \cite{CS}). However, like property $n.u.s.e.$, property quasi-$\alpha$ may not be satisfied even in two-dimensional space.
\begin{proposition} \label{quasi1} The Euclidean space $X=(\R^n,\|\cdot\|_2)$ does not satisfy property quasi-$\alpha$.
\end{proposition}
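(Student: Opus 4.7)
My plan is to exploit the self-duality of Euclidean space in order to turn the hypotheses of quasi-$\alpha$ into geometric constraints between unit vectors, and then use $(iii)$ to force every point of $S_X$ to be attained by the family.

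First, I would identify $X^*$ with $X$ via the Riesz representation. The condition $\|x_\lambda\|=\|x^*_\lambda\|=\langle x^*_\lambda,x_\lambda\rangle=1$ is the equality case of Cauchy–Schwarz and forces $x^*_\lambda=x_\lambda$ under this identification. Hence $(ii)$ reads $|\langle x_\lambda,x_\mu\rangle|\leq \rho(\mu)<1$ whenever $\lambda\neq \mu$. Moreover $\R^n$ is reflexive, so $X^{**}=X$, the weak$^*$-topology agrees with the norm topology, and $\textnormal{ext}(B_X)=S_X$.

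Suppose for contradiction that $\{x_\lambda\}_{\lambda\in \Lambda}$, $\{x^*_\lambda\}$, $\rho$ witness property quasi-$\alpha$. Fix $e\in S_X$ and let $\Lambda_e\subset \Lambda$, $r_e<1$ be as provided by $(iii)$; up to replacing $e$ by $-e$ we may assume $e\in \overline{\{x_\lambda:\lambda\in \Lambda_e\}}$. The crucial observation is that for distinct $\lambda,\mu\in \Lambda_e$,
\[
\|x_\lambda-x_\mu\|_2^{\,2}=2-2\langle x_\lambda,x_\mu\rangle\geq 2(1-r_e)>0.
\]
Hence $\{x_\lambda:\lambda\in \Lambda_e\}$ is a $\sqrt{2(1-r_e)}$-separated subset of $\R^n$. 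Any uniformly separated subset of a metric space is closed, so this upgrades the closure condition in $(iii)$ into actual membership: there exists $\mu_0(e)\in \Lambda_e$ with $x_{\mu_0(e)}=\pm e$. In particular, $S_X\subset \{\pm x_\mu:\mu\in \Lambda\}$.

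Assuming $n\geq 2$, I may choose a sequence $(e_k)\subset S_X\setminus\{\pm e\}$ with $e_k\to e$ (the sphere is connected and uncountable). By the previous step, $e_k=\varepsilon_k x_{\mu_k}$ with $\varepsilon_k\in \{-1,+1\}$, and $\mu_k\neq \mu_0(e)$ because $x_{\mu_k}=\pm e_k\neq \pm e=x_{\mu_0(e)}$. Applying $(ii)$,
\[
|\langle e_k,e\rangle|=|\langle x_{\mu_k},x_{\mu_0(e)}\rangle|\leq \rho(\mu_0(e))<1.
\]
Letting $k\to \infty$ gives $1\leq \rho(\mu_0(e))<1$, the desired contradiction. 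The main obstacle is the second paragraph: recognizing that $(ii)$ together with the uniform bound $r_e<1$ from $(iii)$ forces the family indexed by $\Lambda_e$ to be uniformly discrete, thereby converting the abstract closure condition of $(iii)$ into a genuine equality. This is precisely where the Hilbert-space structure of the Euclidean norm is essential; once established, the remaining steps are a short limiting argument on $S_X$.
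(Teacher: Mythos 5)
Your proof is correct, and it takes a genuinely different route from the paper's. The paper never really uses condition $(iii)$ beyond its norming consequence $B_X=\overline{\textnormal{co}}(\lbrace x_\lambda\rbrace)$: it invokes Milman's partial converse of the Krein--Milman theorem to get $\overline{\lbrace x_\lambda:\lambda\in\Lambda\rbrace}=S_X$, then uses the Banach-space estimate $|1-x^*_\lambda(x_\mu)|=|x^*_\lambda(x_\lambda-x_\mu)|\leq\|x_\lambda-x_\mu\|$ together with $(ii)$ and lets $x_\lambda\to x_\mu$. You instead exploit the Hilbert structure ($x^*_\lambda=x_\lambda$ and $\|x_\lambda-x_\mu\|^2=2-2\langle x_\lambda,x_\mu\rangle$) to show that $\lbrace x_\lambda:\lambda\in\Lambda_e\rbrace$ is uniformly separated, hence closed, thereby upgrading the closure condition in $(iii)$ to actual membership $e=\pm x_{\mu_0(e)}$, and then run a limiting argument along the sphere. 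The paper's route is more robust: it only needs that every point of $S_X$ is extreme and that $S_X$ is perfect, so it shows in effect that no strictly convex space of dimension at least $2$ has property quasi-$\alpha$; your route avoids Milman's theorem entirely at the cost of leaning on the Euclidean identity (though the same separation $\|x_\lambda-x_\mu\|\geq 1-\rho(\mu)$ in fact also follows from the paper's duality estimate, so the Hilbert identity is not truly essential to your step either). A small point in your favor: you flag the hypothesis $n\geq 2$ explicitly, which is genuinely needed --- for $n=1$ the sphere is $\lbrace\pm 1\rbrace$ and quasi-$\alpha$ holds trivially --- whereas the paper uses it tacitly when choosing $x_\lambda\to x_\mu$ with $\lambda\neq\mu$.
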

\begin{proof} Suppose that $(\R^n,\|\cdot\|_2)$ has property quasi-$\alpha$. By the Milman's theorem (the partial converse of Krein-Milman theorem, see \cite{M1}) and since every point of the sphere is an extreme point, we have $S_X=\overline{\textnormal{ext}(B_X)}\subset\overline{\lbrace x_\lambda: \lambda \in \Lambda \rbrace}$ and so we have $ \overline{\lbrace x_\lambda: \lambda \in \Lambda \rbrace}=S_X$. Let us fix $\mu\in \Lambda$. For all $\lambda \neq \mu$, we have $|1-x^*_{\lambda}(x_{\mu})|=|x^*_{\lambda}(x_{\lambda}-x_{\mu})|\leq \|x_{\lambda}-x_{\mu}\|$. This implies that $|x^*_{\lambda}(x_{\mu})| \geq 1-\|x_{\lambda}-x_{\mu}\|$ for all $\lambda\neq \mu$. By the part $(ii)$ of property quasi-$\alpha$, we get $1>\rho(\mu)\geq 1-\|x_{\lambda}-x_{\mu}\|$ for all $\lambda\neq \mu$. Since $\overline{\lbrace x_\lambda: \lambda \in \Lambda \rbrace}=S_X$, we can choose $(x_\lambda)$ such that $\lambda\neq \mu$ and $\|x_\lambda-x_\mu\|\to 0$ and obtain a contradiction. Hence, $(\R^n,\|\cdot\|_2)$ does not satisfy property quasi-$\alpha$.
\end{proof}

 The following remarks together with the diagram that follows it and Theorem \ref{cor1} below,  presents an overall summary of relationships between these different properties.
\begin{remark} \label{recap} The following assertions hold.

$(a)$ $($Property $\alpha$ or uniformly convex space$)$  $\Longrightarrow n.u.s.e. \Longrightarrow  n.u.s.$ 

$(b)$ $ n.u.s. \not \Longrightarrow  n.u.s.e.$

$(c)$ $ n.u.s.e. \not \Longrightarrow  \textnormal{quasi-}\alpha$. In consequence, $ n.u.s. \not \Longrightarrow  \textnormal{quasi-}\alpha$.

$(d)$ $\textnormal{two-dimensional space} \not \Longrightarrow   n.u.s.e.$

$(e)$ $ \textnormal{two-dimensional space} \not \Longrightarrow \textnormal{quasi-}\alpha$.

$(f)$ two-dimensional space $\Longrightarrow$ $n.u.s$. More generally, every finite dimentional space $X$ such that  $\textnormal{ext}(B_X)$ is closed  $\Longrightarrow$ $n.u.s$.
\end{remark}
\begin{proof} For $(a)$, the fact that property $\alpha$ implies property $n.u.s.e.$ is easy to see. The fact that a uniformly convex space has the property $n.u.s.e.$ follows directely from  \cite[Lemma 1]{Pbj}. The fact that the property $n.u.s.e.$ implies the property $n.u.s.e.$ follows immediately from the definitions. For the part $(b)$, we know that every finite dimentional space has property $n.u.s.$ by Proposition \ref{finite}, whereas Example ~\ref{Nfinite} gives an example of a $2$-dimensional space that does not have the property $n.u.s.e.$ For part $(c)$, it is shown in $(a)$ that every uniformly convex space  has property $ n.u.s.e.$, but the Euclidean space $(\R^n,\|\cdot\|_2)$ does not satisfy property quasi-$\alpha$ by Proposition ~\ref{quasi1}. The part $(d)$ follows from Example ~\ref{Nfinite} and the part $(e)$ is a consequence of Proposition \ref{quasi1}. Finally,  the part $(f)$ is given by Proposition ~\ref{finite}.
\end{proof}
As stated in the introduction,  we obtain the following extension of a Lindenstrauss result in \cite[Proposition 1]{Lj}, replacing the property $n.u.s.e.$ by the property $n.u.s.$ involving the stronger property $(\sigma A)$. 
\begin{theorem} \label{cor1} A Banach space with the property $n.u.s.$ has the property $(\sigma A)$ (in particular, it has the property $(A)$).
\end{theorem}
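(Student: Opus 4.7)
The plan is to deduce Theorem \ref{cor1} as a direct corollary of Proposition \ref{Gprince} applied to the symmetric set $C$ witnessing the property $n.u.s.$ Let $X$ have property $n.u.s.$ with modulus $\omega(\cdot)$, and let $C \subset S_X$ be a symmetric set such that $(C,\|\cdot\|)$ has the $USP$ with modulus $\omega(\cdot)$ and $B_X = \overline{\textnormal{co}}(C)$. Since $(C,\|\cdot\|)$ has the $USP$ if and only if $(\overline{C},\|\cdot\|)$ has the $USP$ (with the same modulus), and since $\overline{\textnormal{co}}(\overline{C})=\overline{\textnormal{co}}(C)=B_X$, we may replace $C$ by its closure and assume $C$ is bounded, closed, symmetric, and has the $USP$.

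Fix an arbitrary Banach space $Y$ and a closed subspace $\mathcal{Z}$ of $\mathcal{L}(X,Y)$ containing the rank-one operators. Apply Proposition \ref{Gprince}(i) with $S=0$: the set
\[
\mathcal{G}(0)=\{T\in\mathcal{Z}: T \textnormal{ attains } \|\cdot\|^H\textnormal{-strongly its sup-norm on } C\}
\]
is the complement of a $\sigma$-porous subset of $\mathcal{Z}$. The key step is to verify the inclusion $\mathcal{G}(0)\subseteq NA(X,Y)\cap\mathcal{Z}$. For any $T\in\mathcal{L}(X,Y)$, the real-valued function $x\mapsto\|T(x)\|$ is convex and continuous, so it attains its supremum on the closed convex hull $\overline{\textnormal{co}}(C)=B_X$ at the same value as its supremum on $C$; thus
\[
\|T\|=\sup_{x\in B_X}\|T(x)\|=\sup_{x\in C}\|T(x)\|.
\]
If $T\in\mathcal{G}(0)$, then by Definition \ref{Gdir1} there exists $\bar{x}\in C\subset S_X$ with $\|T(\bar{x})\|=\sup_{x\in C}\|T(x)\|=\|T\|$, so $T\in NA(X,Y)$.

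Consequently $\mathcal{Z}\setminus\bigl(NA(X,Y)\cap\mathcal{Z}\bigr)\subseteq\mathcal{Z}\setminus\mathcal{G}(0)$. Since the right-hand side is $\sigma$-porous in $\mathcal{Z}$ and any subset of a $\sigma$-porous set is $\sigma$-porous, we conclude that $NA(X,Y)\cap\mathcal{Z}$ is the complement of a $\sigma$-porous subset of $\mathcal{Z}$. As $Y$ and $\mathcal{Z}$ were arbitrary, $X$ has property $(\sigma A)$, and in particular property $(A)$ (since $\sigma$-porous sets are of first Baire category, hence proper). There is no real obstacle here beyond the convexity observation that transfers the sup over $C$ to the norm on $B_X$; all the serious work has already been done inside Theorem \ref{Gprince1} / Proposition \ref{Gprince}.
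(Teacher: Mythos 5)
Your proof is correct and follows essentially the same route as the paper: replace $C$ by its closure, apply Proposition \ref{Gprince} with $S=0$, and use that $C$ is norming (i.e.\ $\|T\|=\sup_{x\in C}\|T(x)\|$, which you justify via convexity of $x\mapsto\|T(x)\|$ on $\overline{\textnormal{co}}(C)=B_X$) to conclude that $\mathcal{G}(0)\subseteq NA(X,Y)\cap\mathcal{Z}$. The only difference is that you spell out the norming-set observation and the stability of $\sigma$-porosity under taking subsets, which the paper leaves implicit.
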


\begin{proof} Since $X$ has property $n.u.s.$ there exists a bounded symmetric subset $C$ with the $USP$ such that $B_X=\overline{\textnormal{co}}(C)$. Replacing $C$ by $\overline{C}$, we can assume without loss of generality that $C$ is closed, since $C$ has the $USP$ if and only if $\overline{C}$ has the $USP$. We apply directly Proposition \ref{Gprince} with $S=0$, observing that for every bounded linear operator $T\in \mathcal{L}(X,Y)$, we have $\|T\|=\sup_{x\in C}\|T(x)\|$ (that is, $C$ is a norming set).
\end{proof}
Uniformly convex spaces together with the spaces given in Example \ref{Exnus} have  the property $(\sigma A)$.

In the following diagram, the arrow ``$\longrightarrow$" means ``$\Longrightarrow$" and $RNP$ means ``Radon–Nikodym property":

\begin{tikzpicture}[every text node part/.style={align=center}]
\node(comp) at (4,5)[rectangle,draw,text width=3cm,text centered] {Property $(A)$};
\node(sigma) at (2,3.5)[rectangle,draw] {Property $(\sigma A)$};
\node(desk) at (2,2.4)[rectangle,draw] {Property $n.u.s.$};
\node(lap)  at (-1,-0.5)[rectangle,draw] {Uniformly convex space}; 
\node(HPl)  at (-1.7,1)[rectangle,draw] {Finite dimensional space $X$ \\ such that $\textnormal{ext}(B_X)$ is closed};     
\node(IBM)  at (2.6,1)[rectangle,draw] {Property $n.u.s.e.$\\ (Lindenstrauss \cite{Lj})};  
\node(IBMa)  at (4,-0.5)[rectangle,draw] {Property $\alpha$ \\ (Schachermayer \cite{Sch})};
\node(DELL) at (6.2,1)[rectangle,draw] {Property quasi-$\alpha$\\(Choi-Song \cite{CS})};   
\node(HPr)  at (7,2.4)[rectangle,draw] {$RNP$\\ Bourgain \cite{Bj}};        
                      
\draw[<-] (comp) -- (sigma); 
\draw[<-] (sigma) -- (desk);
\draw[<-] (desk) -- (HPl); 
\draw[<-] (desk) -- (IBM); 
\draw[<-] (IBM) -- (IBMa); 
\draw[<-] (DELL) -- (IBMa); 
\draw[<-] (comp)  -- (DELL); 
\draw[<-] (IBM) -- (lap); 
\draw[<-] (comp)  -- (HPr); 
\end{tikzpicture}


\section{Mappings attaining their norm} \label{Sd}
In this section we will consider two classes of non-linear mappings attaining their norm, namely the class of bounded multilinear mappings and the class of Lipschitz mappings.
\subsection{Bounded multilinear mappings  attaining their norm}\label{F4S}
For all $i\in \lbrace 1,..,n\rbrace$, let $(E_i, \|\cdot\|_i)$ and $W$ be Banach spaces. We denote $\mathcal{ML}(\prod_{i=1}^n E_i, W)$ the space of all bounded multilinear mappings from $\prod_{i=1}^n E_i$ into $W$ endowed with the norm
$$\|B\|=\sup \lbrace \|B(x_1,...,x_n)\|: x_i\in B_{E_i}, i\in \lbrace 1,...,n \rbrace \rbrace$$
for every $B\in \mathcal{ML}(\prod_{i=1}^n E_i, W)$. We say that a bounded multilinear mapping $B\in \mathcal{ML}(\prod_{i=1}^n E_i, W)$ is  {\it norm attaining} if the supremum deﬁning $\|B\|$ is actually a maximum. We prove in Corollary \ref{cor03} below that the set of all bounded multilinear mappings {\it norm attaining} is the complement of a $\sigma$-porous subset of $\mathcal{ML}(\prod_{i=1}^n E_i, W)$,  whenever $E_i$ has the property $n.u.s.e.$ for all $i\in \lbrace 1,...,n \rbrace$. We need the following lemma.

\begin{lemma} \label{Bilneaire}
For all $i\in \lbrace 1,..,n\rbrace$, let $(E_i, \|\cdot\|_i)$ be a Banach space and $S_i\subset S_{E_i}$ be a set of uniformly strongly exposed points of $B_{E_i}$ with a modulus $\omega_{S_i}(\cdot)$. Then, $(\prod_{i=1}^n S_i, \|\cdot\|)$ has the $USP$ in $E:=\prod_{i=1}^n E_i$ with the modulus $$\Delta(\varepsilon)=\frac{1}{n}\min \lbrace \omega_{S_i}(\frac{\varepsilon}{n}): i\in \lbrace 1,...,n \rbrace \rbrace, \forall \varepsilon>0.$$ (We take $\|(x_1,...,x_n)\|=\sum_{i=1}^n \|x_i\|_i$ or any other norm equivalent to this one).
\end{lemma}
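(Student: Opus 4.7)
The plan is to construct, for each $x = (x_1,\ldots,x_n) \in C := \prod_{i=1}^n S_i$ and each $\varepsilon > 0$, a single functional $F_{x,\varepsilon} \in B_{E^*}$ that depends only on $x$ (and $\varepsilon$), by averaging the uniformly strongly exposing functionals of each coordinate. Concretely, for each $i$ let $f^i_{x_i}\in S_{E_i^*}$ be the functional witnessing that $x_i \in S_i$ is uniformly strongly exposed with modulus $\omega_{S_i}$; so $f^i_{x_i}(x_i)=1$ and $f^i_{x_i}(z)\le 1-\omega_{S_i}(\delta)$ for every $z\in B_{E_i}$ with $\|x_i-z\|_i\ge \delta$. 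Define
\[
F_{x,\varepsilon}(z_1,\ldots,z_n):=\frac{1}{n}\sum_{i=1}^n f^i_{x_i}(z_i),\qquad z=(z_1,\ldots,z_n)\in E.
\]
Under the $\ell_1$-type norm $\|z\|_E=\sum_i \|z_i\|_i$, the dual norm of any functional of this block form is the $\ell_\infty$-maximum of its coordinate norms, so $\|F_{x,\varepsilon}\|_{E^*}= \max_i \tfrac{1}{n}\|f^i_{x_i}\|_{E_i^*} = \tfrac{1}{n}\le 1$, and hence $F_{x,\varepsilon}\in B_{E^*}$.

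Next, take any $y=(y_1,\ldots,y_n)\in C$ with $\|x-y\|_E \ge \varepsilon$. A pigeonhole argument on $\sum_i \|x_i-y_i\|_i\ge \varepsilon$ produces an index $j\in\{1,\ldots,n\}$ with $\|x_j-y_j\|_j\ge \varepsilon/n$. Since $y_j\in S_j\subset B_{E_j}$, the uniform strong exposition at $x_j$ yields $f^j_{x_j}(y_j)\le 1-\omega_{S_j}(\varepsilon/n)$; for the remaining indices the bound $f^i_{x_i}(y_i)\le \|f^i_{x_i}\|\|y_i\|_i = 1$ suffices. Using $F_{x,\varepsilon}(x)=\frac{1}{n}\sum_i f^i_{x_i}(x_i)=1$ and writing the difference as
\[
F_{x,\varepsilon}(x)-F_{x,\varepsilon}(y)=\frac{1}{n}\sum_{i=1}^{n}\bigl(1-f^i_{x_i}(y_i)\bigr),
\]
every summand is nonnegative, and the $j$-th summand is at least $\omega_{S_j}(\varepsilon/n)$, so
\[
F_{x,\varepsilon}(x)-F_{x,\varepsilon}(y)\;\ge\; \frac{\omega_{S_j}(\varepsilon/n)}{n}\;\ge\; \frac{1}{n}\min_{1\le i\le n}\omega_{S_i}(\varepsilon/n)\;=\;\Delta(\varepsilon).
\]
This is exactly the $USP$ inequality for $(C,\|\cdot\|)$ in $E$ with modulus $\Delta(\cdot)$.

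The argument is essentially just averaging plus pigeonhole; the only point requiring care is that the family $\{F_{x,\varepsilon}\}$ is indexed by $(x,\varepsilon)$ alone, whereas the critical coordinate $j$ depends on $y$. This is harmless because $j$ is invoked only to lower-bound a single term of the nonnegative sum defining $F_{x,\varepsilon}(x)-F_{x,\varepsilon}(y)$; the remaining terms contribute nonnegatively thanks to $\|f^i_{x_i}\|=1$ and $y_i\in B_{E_i}$. If one prefers a norm on $E$ equivalent to the $\ell_1$-sum, the same argument transfers with the modulus rescaled by the equivalence constants.
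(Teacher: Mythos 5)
Your proof is correct and follows essentially the same route as the paper: the averaged functional $\frac{1}{n}\sum_i f^i_{x_i}(\cdot)$ indexed by $x$ alone, the pigeonhole selection of a coordinate $j$ with $\|x_j-y_j\|_j\ge\varepsilon/n$, and the termwise bound $f^i_{x_i}(y_i)\le 1$ for the remaining coordinates. The only addition is your explicit verification that $\|F_{x,\varepsilon}\|_{E^*}\le 1$ via the $\ell_\infty$ duality of the $\ell_1$-sum, which the paper leaves implicit.
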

\begin{proof} For all $i\in \lbrace 1,..,n\rbrace$, since $S_i$ is a set of uniformly strongly exposed points with a modulus $\omega_{S_i}(\cdot)$, there exists a family $\mathcal{F}_{S_i}:=\lbrace f_{i, t} \in S_{E^*_i}: t\in S_i\rbrace$ such that $f_{i,t}(t)=1$ for all $t\in S_i$ and :
\[ \forall t' \in B_{E_i}, \forall t\in S_i: \|t-t'\| \geq \varepsilon \Longrightarrow 1 -  \omega_{S_i}(\varepsilon) \geq  \langle f_{i,t}, t' \rangle.\]
We are going to prove that $C:=\prod_{i=1}^n S_i$ has the $USP$ in $E=\prod_{i=1}^n E_i$ with the modulus $\Delta(\cdot)$ and the family 
$$\mathcal{G}_{C}:=\left \lbrace g_{(x_1,...,x_n)}: (t_1,...,t_n)\mapsto \frac{1}{n}\sum_{i=1}^n f_{i,x_i}(t_i) \right\rbrace_{ (x_1,...,x_n)\in \prod_{i=1}^n S_i}\subset B_{E^*}.$$ 
Indeed, let $(x_1,...,x_n), (y_1,...,y_n)\in C$ and suppose that $\sum_{i=1}^n \|x_i-y_i\|_i\geq \varepsilon$. Then, necessarily there exists some $i_0\in \lbrace 1,..., n\rbrace$ such that  $\|x_{i_0}-y_{i_0}\|_{i_0}\geq \frac{\varepsilon}{n}$.  Thus, we have
\begin{eqnarray*} \label{EQt1} \langle f_{i_0,x_{i_0}}, x_{i_0} \rangle - \omega_{S_{i_0}}(\frac{\varepsilon}{n})= 1  - \omega_{S_{i_0}}(\frac{\varepsilon}{n}) \geq  \langle f_{i_0,x_{i_0}}, y_{i_0} \rangle.
\end{eqnarray*}
On the other hand, for all $z\in B_{E_i}$ with $i\in \lbrace 1,...,n \rbrace \setminus \lbrace i_0\rbrace$, we have
\begin{eqnarray*} \label{EQt2}
\langle f_{i,x_i}, x_i \rangle= 1\geq  \langle f_{i,x_i}, z\rangle.
\end{eqnarray*}
It follows that,
\begin{eqnarray*}  \sum_{i=1}^n \langle f_{i,x_i}, x_i \rangle  - n \Delta(\varepsilon) \geq \sum_{i=1}^n \langle f_{i,x_i}, x_i \rangle  - \omega_{S_{i_0}}(\frac{\varepsilon}{n}) \geq  \sum_{i=1}^n \langle f_{i,x_i}, y_i \rangle.
\end{eqnarray*}
Thus, 
\begin{eqnarray*}  g_{(x_1,...,x_n)}(x_1,...,x_n) -  \Delta(\varepsilon) \geq  g_{(x_1,...,x_n)}(y_1,...,y_n).
\end{eqnarray*}
This shows that $C:=\prod_{i=1}^n S_i$ has the $USP$ in $E=\prod_{i=1}^n E_i$ with the modulus $\Delta(\cdot)$.
\end{proof}
The  paper \cite{DGKLM} could be consulted for results concerning norm attaining multilinear mappings.
\begin{corollary} \label{cor03} Let $E_i$ be a Banach space having the property $n.u.s.e.$  for all $i\in \lbrace 1,..., n \rbrace$  and $W$ be any Banach space. Let $\mathcal{Z}$ be any closed subspace of $\mathcal{ML}(\prod_{i=1}^n E_i, W)$ containing the rank-one operators from $\prod_{i=1}^n E_i$ into $W$. Then, the following assertions hold.

$(i)$ The set of all bounded multilinear mappings $B\in \mathcal{Z}$ {\it norm attaining}, is the complement of a $\sigma$-porous subset of $\mathcal{Z}$.

$(ii)$ If we denote $S_i$ a set of uniformly strongly exposed points of $B_{E_i}$ with a modulus $\omega_{S_i}(\cdot)$, for all $i\in \lbrace 1,..., n \rbrace$, then we have the following quantitative  Bishop-Phelps-Bollobás property: for every $\varepsilon>0$, there exists $$\lambda(\varepsilon):=\frac{\varepsilon}{8n}\min \lbrace \omega_{S_i}(\frac{\varepsilon}{4n}): i\in \lbrace 1,...,n \rbrace \rbrace >0$$ such that for every $T\in \mathcal{Z}$, $\|T\|=1$ and every $(x_1,...,x_n)\in \prod_{i=1}^n S_i$ satisfying $\|T(x_1,...,x_n)\|>1-\lambda(\varepsilon),$ there exists $B \in \mathcal{Z}$ and $(\overline{x}_1,...,\overline{x}_n)\in \prod_{i=1}^n S_i$ such that: 

$(a)$ $\|B\|=\|B(\overline{x}_1,...,\overline{x}_n)\|=1$ and $B$ attains $\|\cdot\|^s$-strongly its sup-norm on $\prod_{i=1}^n S_i$ at $(\overline{x}_1,...,\overline{x}_n)$.

$(b)$ $\sum_{i=1}^n \|\bar{x}_i-x_i\|<\varepsilon \textnormal{ and } \|T-B\|<\varepsilon.$
\end{corollary}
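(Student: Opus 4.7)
The strategy is to apply the framework of Theorem \ref{Gprince1} to $C:=\prod_{i=1}^n \overline{S_i}$, viewed as a bounded, closed, symmetric subset of the $\ell_1$-sum $E:=\prod_{i=1}^n E_i$, where $S_i\subset S_{E_i}$ is a symmetric set of uniformly strongly exposed points of $B_{E_i}$ with modulus $\omega_{S_i}$ and strongly exposing functionals $f_{i,x}$ (witnessing property $n.u.s.e.$, with closures taken to ensure closedness while preserving uniform strong exposure). By multilinearity together with $B_{E_i}=\overline{\textnormal{co}}(S_i)$, the multilinear norm of every $B\in\mathcal{Z}$ equals $\|B\|_\infty:=\sup_{x\in C}\|B(x)\|$, so multilinear norm attainment and sup-norm attainment on $C$ coincide, and $\mathcal{Z}$ embeds isometrically into $\mathcal{C}_b(C,W)$ (verifying hypothesis $(i)$ of Theorem \ref{Gprince1}).

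The delicate issue is the pseudometric on $C$. The natural $\gamma^H$ associated with $\|\cdot\|_1$ only accounts for the global sign flip $(x_i)\mapsto(-x_i)$, whereas multilinear norm attainment is invariant under every coordinate-wise sign flip. I would therefore work with the refined pseudometric
$$\Gamma(x,y):=\min_{\eta\in\{\pm 1\}^n}\|\eta\cdot x-y\|_1,\qquad \eta\cdot x:=(\eta_1 x_1,\ldots,\eta_n x_n),$$
which is a complete pseudometric on $C$ satisfying $\Gamma(x,y)=\Gamma(-x,-y)$ and $\gamma^H=\Gamma$. A coordinate-by-coordinate pigeonhole shows that $\Gamma(x,y)\geq \varepsilon$ forces some index $i_0$ with $\min(\|x_{i_0}-y_{i_0}\|,\|x_{i_0}+y_{i_0}\|)\geq \varepsilon/n$; the uniform strong exposure of $S_{i_0}$ combined with its symmetry then yields $|f_{i_0,x_{i_0}}(y_{i_0})|\leq 1-\omega_{S_{i_0}}(\varepsilon/n)$.

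The next step is to adapt Lemma \ref{ZUSP}. Hypothesis $(ii)$ of Theorem \ref{Gprince1} demands that $\mathcal{Z}$ contain linear rank-one operators $x^*(\cdot)e$, which is not literally the case for $n\geq 2$; however in the proof of Lemma \ref{ZUSP} these are used only to produce, for each $x\in C$ and $w^*\in S_{W^*}$, a separator in $\mathcal{Z}$ for $\mathcal{K}_C$ in $\mathcal{Z}^*$. In our setting we use the multilinear rank-one operators
$$T_{\varepsilon,x,w^*}(t_1,\ldots,t_n):=\Bigl(\prod_{i=1}^n f_{i,x_i}(t_i)\Bigr) e_{\varepsilon,w^*},$$
which do lie in $\mathcal{Z}$ by the rank-one assumption, with $e_{\varepsilon,w^*}\in S_W$ chosen so that $w^*(e_{\varepsilon,w^*})$ is sufficiently close to $1$. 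The preceding estimate gives $|\prod_i f_{i,x_i}(y_i)|\leq 1-\omega_{S_{i_0}}(\varepsilon/n)$ whenever $\Gamma(x,y)\geq \varepsilon$, yielding the $w^*USP$ of $(\mathcal{K}_C,\Gamma_\mathcal{P})$ in $\mathcal{Z}^*$ with modulus proportional to $\min_i\omega_{S_i}(\varepsilon/n)$.

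The principal obstacle is precisely the reconciliation between the coordinate-wise symmetry of multilinear mappings and the global-sign framework of Theorem \ref{Gprince1}; it is resolved by the refined pseudometric $\Gamma$ and by using multilinear (rather than linear) rank-one separators, at the cost of a factor of $n$ in the modulus. Once the adapted Lemma \ref{ZUSP} is in place, the variational principle and BPB argument in the proof of Theorem \ref{Gprince1} transfer verbatim: part $(i)$ follows as the $\sigma$-porous complement in $\mathcal{Z}$ of the set of $B\in\mathcal{Z}$ attaining $\Gamma$-strongly their sup-norm on $C$ (these are in particular norm-attaining as multilinear mappings); part $(ii)$ is the quantitative BPB statement with the explicit modulus $\lambda(\varepsilon)=\frac{\varepsilon}{8n}\min_i\omega_{S_i}(\varepsilon/(4n))$. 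The coordinate-wise sign invariance of $C$ and of multilinear norm attainment allows one to replace $\bar{x}$ by an appropriate $(\pm\bar{x}_i)_i$ so that $\sum_i\|\bar{x}_i-x_i\|<\varepsilon$ in $(ii)(b)$, rather than just $\Gamma(\bar{x},x)<\varepsilon$.
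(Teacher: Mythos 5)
Your proposal is correct in substance, but it takes a genuinely different route from the paper, and the difference matters. The paper's own proof first establishes Lemma \ref{Bilneaire}: $C=\prod_{i=1}^n S_i$ has the $USP$ in $\prod_{i=1}^n E_i$ for the \emph{norm} metric, witnessed by the \emph{additive} linear separators $g_{(x_1,\dots,x_n)}=\frac{1}{n}\sum_{i=1}^n f_{i,x_i}\circ\pi_i$, and then invokes Theorem \ref{Gprince1} as a black box with $\gamma=\|\cdot\|$. You instead quotient by all coordinate-wise sign flips via $\Gamma$ and use the multiplicative separators $\bigl(\prod_i f_{i,x_i}(t_i)\bigr)e$. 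Your route is the more defensible one, for two reasons. First, the rank-one operators $g_x(\cdot)e$ that Lemma \ref{ZUSP} manufactures from the paper's additive separators are linear on the product, hence not multilinear for $n\ge 2$, so they need not belong to $\mathcal{Z}\subseteq\mathcal{ML}(\prod_{i=1}^n E_i,W)$, and hypothesis $(ii)$ of Theorem \ref{Gprince1} is not actually verified in the paper's application; your product separators do lie in $\mathcal{Z}$. Second, for $n\ge 2$ no multilinear map can attain its sup-norm $\|\cdot\|^H$-strongly on $\prod_{i=1}^n S_i$ in the global-sign sense (if the maximum is attained at $\bar x$ it is also attained at $(-\bar x_1,\bar x_2,\dots,\bar x_n)$, which is at $\|\cdot\|^H$-distance $2$ from $\bar x$), so the conclusion has to be formulated with your $\Gamma$; your pigeonhole estimate showing that $\Gamma(x,y)\ge\varepsilon$ forces $|f_{i_0,x_{i_0}}(y_{i_0})|\le 1-\omega_{S_{i_0}}(\varepsilon/n)$ for some $i_0$ is exactly the step that makes the product separator separate. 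The one point you dispatch too quickly is the claim that the rest ``transfers verbatim'': a multilinear map is not $\Gamma$-continuous (a single coordinate flip sends $B(x)$ to $-B(x)$ at $\Gamma$-distance $0$), so $\mathcal{Z}$ is not literally a subspace of $\mathcal{C}_b(C,W)$ for $(C,\Gamma)$; this is harmless because only $\|B(\cdot)\|$, which is invariant under coordinate-wise sign flips, enters the argument of Lemma \ref{GgammaH} and the well-definedness of $\gamma_{\mathcal{P}}$, but it deserves a sentence.
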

\begin{proof} From the assumption, for each $i\in \lbrace 1,..., n \rbrace$ there exists a set  $S_i\subset S_E$ of uniform strongly exposed points satisfying $B_{E_i}=\overline{\textnormal{co}}(S_i)$. It is easy to see that for every $B\in\mathcal{ML}(\prod_{i=1}^n E_i, W)$ we have
$$\|B\|= \sup \left \lbrace \|B(x_1,...,x_n)\|: (x_1,...,x_n)\in \prod_{i=1}^n  S_i \right \rbrace.$$
By lemma \ref{Bilneaire} the set $\prod_{i=1}^n  S_i$ has the $USP$ in $\prod_{i=1}^n E_i$. Thus, we apply Theorem  \ref{Gprince1} with the set $C=\prod_{i=1}^n  S_i$ and by considering $\mathcal{Z}$ as a subset of $\mathcal{C}_b(C, W)$ (note that $\|B\|=\|B\|_{\infty}$, in this case). 
\end{proof}
\begin{remark} If we assume that $E_i$ is a uniformly convex Banach space with a modulus of uniform convexity $\delta_i(\cdot)$, then we can take $S_i=S_{E_i}$ and $\omega_{S_i}(\cdot)=\delta_i(\cdot)$ for all $i\in \lbrace 1,..., n \rbrace$ in the above corollary.
\end{remark}
\subsection{Lipschitz mappings attaining their norm} \label{FS}
Let $Y$ be a Banach space and $(M,d)$ be a pointed metric space, that is, a metric space in which we distinguish an element, called 0. By $\textnormal{Lip}_0(M,Y)$ we denote the Banach space of all $Y$-valued Lipschitz maps on $M$ that vanish at $0$, equipped with the norm:
$$\|g\|_L:=\sup_{x, y \in M: x\neq y} \frac{\|g(x)-g(y)\|}{d(x,y)}; \hspace{2mm} \forall g\in \textnormal{Lip}_0(M,Y).$$

When $Y=\R$, we simply denote $\textnormal{Lip}_0(M)$ instead of $\textnormal{Lip}_0(M,\R)$. The evaluation at $x\in M$ (or the Dirac mass associated to the point $x\in M$) is $\delta_x : g \mapsto g(x)$; $g\in \textnormal{Lip}_0(M)$. For each $x\in M$, $\delta_x\in (\textnormal{Lip}_0(M), \|\cdot\|_L)^*$ and the subspace $\overline{\textnormal{span}}\lbrace \delta_x: x\in M\rbrace$ of $(\textnormal{Lip}_0(M), \|\cdot\|_L)^*$ is denoted by $\mathcal{F}(M)$ and is called {\it Lipschitz-free} space of $M$ (for more details on Lipschitz-free space we refeer to \cite{GK}, \cite{W}, \cite{AE}). The Lipschitz-free space of $M$ is the predual of $\textnormal{Lip}_0(M)$, that is, $\mathcal{F}(M)^*=\textnormal{Lip}_0(M)$ as Banach spaces. It is easy to see that $\|\delta_x- \delta_y\|=d(x,y)$ for all $x,y \in M$. In other words, $\delta$ is an (non linear) isometry from $M$ into $\mathcal{F}(M)$. It is well-known that $\textnormal{Lip}_0(M,Y)$ is isometrically identiﬁed with the space of bounded linear operators $\mathcal{L}(\mathcal{F}(M), Y)$ by the identification $[f: M\to Y] \to [\hat{f} : \mathcal{F}(M) \to Y]$, where $\hat{f}$ is the linear continuous map defined by $\hat{f}(\delta_x)=\langle \hat{f}, \delta_x\rangle=f(x)$ for all $x\in M$.

For every $x,y\in M$ such that $x\neq y$, we denote $$\mathfrak{m}_{x,y}:=(\delta_x -\delta_y)/d(x,y).$$ 
We denote $\textnormal{Mol}(M):= \lbrace \mathfrak{m}_{x,y}: x, y \in M; x\neq y\rbrace$ the set of {\it ``molecules"} which is a subset of the unit sphere $S_{\mathcal{F}(M)}$ of the Lipschitz-free space. Note that, since $\textnormal{Mol}(M)$
 is balanced and norming for $\textnormal{Lip}_0(M,Y)$, a straightforward application of Hahn-Banach theorem implies that
$$B_{\mathcal{F}(M)}=\overline{\textnormal{co}}(\textnormal{Mol}(M)).$$
For every Lipschitz functional $f\in\textnormal{Lip}_0(M,Y)$, the action of $f$ at an element $\mathfrak{m}_{x,y}\in \textnormal{Mol}(M)$ is $\langle \hat{f},\mathfrak{m}_{x,y} \rangle=(f(x)-f(y))/d(x,y)\in Y$. With these notations we can write the norm of $f\in\textnormal{Lip}_0(M,Y)$, equivalently of $\hat{f}\in \mathcal{L}(\mathcal{F}(M), Y)$,  as follows:
$$\|f\|_L=\|\hat{f}\|=\sup_{\mathfrak{m}_{x,y}\in \textnormal{Mol}(M)}\|\langle \hat{f},\mathfrak{m}_{x,y}\rangle\|.$$

We say that $f$ attains its norm in the {\it ``strong sense"} if there exists a molecule $\mathfrak{m}_{x,y}$ such that $$\|f\|_L=\|\langle \hat{f}, \mathfrak{m}_{x,y}\rangle\|=\frac{\|f(x)-f(y)\|}{d(x,y)},$$ in other words, if the above supremum is attained at some point of $\textnormal{Mol}(M)$. The set of all Lipschitz functional $f \in \textnormal{Lip}_0(M,Y)$ attaining their norm at some point of the subset $\textnormal{Mol}(M)$ is denoted  $\textnormal{SNA}(M,Y)$ (see \cite{CCGMR, KMS}).  A consequence of the Bishop-Phelps theorem, stats that the set of real-valued Lipschitz functional $g\in\textnormal{Lip}_0(M)$ attaining their norm at some point of the closed unit ball $B_{\mathcal{F}(M)}$, is norm dense in $\textnormal{Lip}_0(M)$. Unfortunately it turns out that the set of Lipschitz functional attaining their norm at some point of the subset $\textnormal{Mol}(M)$, cannot be norm dense even in the one-dimentional case $M=\R$ (see for instance \cite{CCGMR}, \cite{KMS}).  

In \cite[Proposition 3.3 \& Corollary 3.8]{CCGMR}, the authors show that if $B_{\mathcal{F}(M)}$ is the closed convex hull of a set of uniformly strongly exposed points (equivalently, if $\mathcal{F}(M)$ has the property $n.u.s.e$), then  $\textnormal{SNA}(M,Y)$ is norm dense in $\textnormal{Lip}_0(M,Y)$. We extend these results in Corollary \ref{cor2} below  in the following way:  the density of $\textnormal{SNA}(M,Y)$ in $\textnormal{Lip}_0(M,Y)$ is replaced by the stronger fact that,  $\textnormal{SNA}(M,Y)\cap \mathcal{Z}$ is the complement of a $\sigma$-porous subset of $\mathcal{Z}$ for every closed subspace $\mathcal{Z}$ of $\textnormal{Lip}_0(M,Y)$ containing the sets $\textnormal{Lip}_0(M,\R)y$ for every $y\in Y$ (corresponding of the set of rank-one operators in $\mathcal{L}(\mathcal{F}(M), Y)$). 

\begin{corollary} \label{cor2} Let $(M,d)$ be a pointed metric space. Suppose that  $\mathcal{F}(M)$  has the property $n.u.s.e.$ Then,  $\textnormal{SNA}(M,Y)\cap \mathcal{Z}$ is the complement of a $\sigma$-porous subset of $\mathcal{Z}$ for every closed subspace $\mathcal{Z}$ of $\textnormal{Lip}_0(M,Y)$ containing the sets $\textnormal{Lip}_0(M,\R)y$ for every $y\in Y$. In particular $\textnormal{SNA}(M,Y)$ is the complement of a $\sigma$-porous subset of $\textnormal{Lip}_0(M,Y)$. 
\end{corollary}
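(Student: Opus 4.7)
The plan is to reduce the corollary to Proposition~\ref{Gprince} applied to a suitable closed symmetric subset $C \subset \textnormal{Mol}(M)$. Via the isometric identification $\textnormal{Lip}_0(M,Y) \cong \mathcal{L}(\mathcal{F}(M), Y)$, a rank-one operator $\phi \otimes y : z \mapsto \phi(z)y$ (with $\phi \in \mathcal{F}(M)^* = \textnormal{Lip}_0(M,\R)$ and $y \in Y$) corresponds exactly to the Lipschitz map $x \mapsto \phi(\delta_x)y$, so the hypothesis that $\mathcal{Z}$ contains $\textnormal{Lip}_0(M,\R)\,y$ for every $y \in Y$ is precisely the hypothesis of Proposition~\ref{Gprince} that $\mathcal{Z}$ contains the rank-one operators from $\mathcal{F}(M)$ into $Y$.

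Next, I would use property $n.u.s.e.$ of $\mathcal{F}(M)$ to fix a symmetric set $S \subset S_{\mathcal{F}(M)}$ of uniformly strongly exposed points with $B_{\mathcal{F}(M)} = \overline{\textnormal{co}}(S)$, and set $C := \overline{S}$. By the fact already cited in the excerpt (Fact 4.1 of \cite{CGMR}) that the closure of a set of uniformly strongly exposed points is again uniformly strongly exposed, $C$ is a bounded, closed, symmetric subset of $S_{\mathcal{F}(M)}$ whose points are uniformly strongly exposed; in particular $(C,\|\cdot\|)$ has the $USP$ and $\overline{\textnormal{co}}(C) = B_{\mathcal{F}(M)}$, so $C$ is a norming set. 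The \emph{key structural step} is to observe that $C \subset \textnormal{Mol}(M)$: each point of $C$ is a strongly exposed point of $B_{\mathcal{F}(M)}$, and it is a well-known fact in the theory of Lipschitz-free spaces (used precisely at this place in \cite{CCGMR}) that every strongly exposed point of $B_{\mathcal{F}(M)}$ is a molecule. I expect this molecule-identification step to be the main point to justify with the appropriate reference from the Lipschitz-free literature; everything else is a direct application of the machinery already set up.

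With this $C$ in hand, applying Proposition~\ref{Gprince}(i) with the zero perturbation gives that
\[
\mathcal{G}(0) := \{T \in \mathcal{Z} : T \text{ attains } \|\cdot\|^H\text{-strongly its sup-norm on } C\}
\]
is the complement of a $\sigma$-porous subset of $\mathcal{Z}$. For any $T \in \mathcal{G}(0)$, there is $\bar{x} \in C$ with $\|T(\bar{x})\| = \sup_{x \in C}\|T(x)\| = \|T\|$, the last equality because $C$ is norming. Since $\bar{x} \in C \subset \textnormal{Mol}(M)$, we conclude $T \in \textnormal{SNA}(M,Y)$, hence $\mathcal{G}(0) \subset \textnormal{SNA}(M,Y) \cap \mathcal{Z}$. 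Because $\sigma$-porosity is stable under passing to subsets, every superset of the complement of a $\sigma$-porous set is itself the complement of a $\sigma$-porous set, so $\textnormal{SNA}(M,Y) \cap \mathcal{Z}$ has the desired form. The final ``in particular'' assertion follows by specializing to $\mathcal{Z} = \textnormal{Lip}_0(M,Y)$.
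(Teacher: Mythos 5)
Your proposal is correct and follows essentially the same route as the paper: fix a symmetric uniformly strongly exposed set $S$ with $B_{\mathcal{F}(M)}=\overline{\textnormal{co}}(S)$, use the fact that strongly exposed points of $B_{\mathcal{F}(M)}$ are molecules (the paper cites \cite[Proposition 1.1]{CCGMR}) to get $\overline{S}\subset\textnormal{Mol}(M)$, and apply Proposition~\ref{Gprince} with $C=\overline{S}$ under the identification $\textnormal{Lip}_0(M,Y)\cong\mathcal{L}(\mathcal{F}(M),Y)$. Your added explicit remarks (that $C$ is norming and that a superset of the complement of a $\sigma$-porous set is again such a complement) are only implicit in the paper but entirely correct.
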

\begin{proof} By definition $\mathcal{F}(M)$  has the property $n.u.s.e.$ if $B_{\mathcal{F}(M)}=\overline{\textnormal{co}}(S)$ for some symmetric set $S\subset S_{\mathcal{F}(M)}$ of uniformly strongly exposed points. From \cite[Proposition 1.1]{CCGMR}, we have $S\subset \textnormal{Mol}(M)$. Since the set $\textnormal{Mol}(M)$ is closed, we get $\overline{S}\subset \textnormal{Mol}(M)$. Now, since $(S,\|\cdot\|)$ and also $(\overline{S},\|\cdot\|)$ have in particular the $USP$, we use Proposition ~\ref{Gprince} with the set $C=\overline{S}$ and the space $\mathcal{L}(\mathcal{F}(M),Y)\simeq \textnormal{Lip}_0(M,Y)$ to obtain that for every closed subspace $\mathcal{Z}$ of $\mathcal{L}(\mathcal{F}(M),Y)$ containing the set of rank-one operators, the set 
\begin{eqnarray*}
\lbrace T\in \mathcal{Z}: T \textnormal{ attains } \|\cdot\|^s\textnormal{-strongly its sup-norm on } C\rbrace,
\end{eqnarray*}
is the complement of a $\sigma$-porous set of $\mathcal{Z}$. This implies that, $\textnormal{SNA}(M,Y)\cap \mathcal{Z}$ is the complement of a $\sigma$-porous subset of $\mathcal{Z}$ for every closed subspace $\mathcal{Z}$ of $\textnormal{Lip}_0(M,Y)$ containing the sets $\textnormal{Lip}_0(M,\R)y$ for every $y\in Y$ (corresponding of the set of rank-one in $\mathcal{L}(\mathcal{F}(M), Y)$). In particular with $\mathcal{Z}=\textnormal{Lip}_0(M,Y)$ we obtain that $\textnormal{SNA}(M,Y)$ is the complement of a $\sigma$-porous subset of $\textnormal{Lip}_0(M,Y)$. 
\end{proof}
We refer to \cite{CCGMR} for a number of examples where $\mathcal{F}(M)$ has the property $n.u.s.e.$ Notice that the above corollary is also true if we replace the property $n.u.s.e.$ by the existence of a symmetric subset $C$ of $\textnormal{Mol}(M)$ such that $(C,\|\cdot\|)$ has the $USP$ and satisfies $B_{\mathcal{F}(M)}=\overline{\textnormal{co}}(C)$ (which means in particular that $\mathcal{F}(M)$ has the property $n.u.s.$).

\subsection*{Acknowledgements}
This research has been conducted within the FP2M federation (CNRS FR 2036) and  SAMM Laboratory of the University Paris Panthéon-Sorbonne.

\bibliographystyle{amsplain}

\end{document}